\documentclass[12pt]{amsart}
\usepackage{etex}
\usepackage{amsfonts, amsmath, amssymb, latexsym, mathtools}

\usepackage{amscd,amssymb}
\usepackage{verbatim}
\usepackage{pstricks}
\usepackage{pst-node}
\usepackage[mathscr]{euscript}
\usepackage{tikz}
\usetikzlibrary{matrix,arrows}
\usepackage[normalem]{ulem}
\usepackage{varwidth}
\usepackage{leftidx}
\usepackage{bm}

\newsymbol\pp 1275
\usepackage{tikz}
\usetikzlibrary{matrix,arrows,backgrounds,shapes.misc,shapes.geometric,patterns,calc,positioning}

\usepackage[colorlinks=true, pdfstartview=FitV, linkcolor=blue, citecolor=blue, urlcolor=blue]{hyperref}

\usepackage{fullpage}
\usepackage{graphicx}
\usepackage{enumerate}

\def\VR{\kern-\arraycolsep\strut\vrule &\kern-\arraycolsep}
\def\vr{\kern-\arraycolsep & \kern-\arraycolsep}

\newtheorem{theorem}{Theorem}

\newtheorem{lemma}[theorem]{Lemma}
\newtheorem{prop}[theorem]{Proposition}
\newtheorem{corollary}[theorem]{Corollary}

\theoremstyle{definition}

\newtheorem{rmk}[theorem]{Remark}
\newenvironment{remark}[1][]{\begin{rmk}[#1]\pushQED{\qed}}{\popQED \end{rmk}}

\newtheorem{qu}[theorem]{Question}

\newtheorem*{rmknonum}{Remark}

\newtheorem{obs}[theorem]{Observation}

\newtheorem{ex}[theorem]{Example}
\newenvironment{example}[1][]{\begin{ex}[#1]\pushQED{\qed}}{\popQED \end{ex}}

\newcommand{\tr}{\operatorname{Tr}}

\newcommand{\rep}{\operatorname{rep}}

\newcommand{\bl}{\operatorname{BL}}

\newcommand{\GL}{\operatorname{GL}}

\newcommand{\ZZ}{\mathbb Z}

\newcommand{\RR}{\mathbb R}

\newcommand{\QQ}{\mathbb Q}

\newcommand{\tup}{\mathbf p}

\newcommand{\locus}{\bm{\mathscr{V}}}
\newcommand{\Y}{Y}
\newcommand{\X}{\bm{\mathscr{X}}}
\newcommand{\V}{V}
\newcommand{\diag}{\mathsf{diag}}
\newcommand{\Q}{\mathcal{Q}}

\newcommand{\Id}{\mathbf{I}}

\newcommand{\ddim}{\operatorname{\mathbf{dim}}}
\newcommand{\cc}{\operatorname{\mathbf{c}}}
\newcommand{\dd}{\operatorname{\mathbf{d}}}

\newcommand{\ff}{\operatorname{\mathbf{r}}}

\newcommand{\nn}{\mathbf{n}}

\newcommand{\ar}{\mathscr{A}}
\newcommand{\s}{\mathcal{S}}

\newcommand{\capa}{\mathbf{cap}}

\newcommand{\pd}{\mathsf{PD}}

\newcommand\restr[2]{{
  \left.\kern-\nulldelimiterspace 
  #1 
  \vphantom{\big|} 
  \right|_{#2} 
  }}

\begin{document}
\title{Algebraicity of the Brascamp-Lieb constants}

\author{Calin Chindris and Harm Derksen}
\address{University of Missouri-Columbia, Mathematics Department, Columbia, MO, USA}
\email[Calin Chindris]{chindrisc@missouri.edu}

\address{Northeastern University, Boston, MA}
\email[Harm Derksen]{ha.derksen@northeastern.edu}

\date{\today}
\bibliographystyle{amsalpha}
\keywords{Quiver Brascamp-Lieb constants, capacity of quiver data, Definable Choice Property, extremizable data, geometric quiver data}

\begin{abstract}
We show that the Brascamp-Lieb (BL) constant $\bl(-,\tup)$, where $\tup$ is a tuple of rational exponents, is a semi-algebraic function on the set of feasible data. Consequently, it is algebraic in the sense that it satisfies a polynomial relation of the form $P( \V, \bl(\V, \tup))=0$ for a non-zero polynomial $P$. In fact, we establish an analogous statement in the more general setting of quiver BL constants associated to representations of bipartite quivers.
\end{abstract}

\maketitle
\setcounter{tocdepth}{1}
\tableofcontents

\section{Introduction} 
Let $k$ and $m$ be positive integers. Let $\dd=(d_1, \ldots, d_k)$ and $\nn=(n_1, \ldots, n_m)$ be two tuples of positive integers and $\tup=(p_1, \ldots, p_m)$ a tuple of positive real numbers such that 
\begin{equation}
\sum_{i=1}^k d_i=\sum_{j=1}^m p_j n_j.
\end{equation}
Furthermore, for each pair $(i,j)\in [k]\times [m]$, let $\ar_{ij}$ be a finite index set. We can visualize the sets $[k]$, $[m]$, and $\ar_{ij}$, $(i,j)\in [k]\times [m]$, as the bipartite quiver $\Q$ with source vertices $\{\underline{1}, \ldots, \underline{k}\}$, sink vertices $\{1, \ldots, m\}$, and with the arrows from $\underline{i}$ to $j$ labeled by the elements of $\ar_{ij}$. 

Now, let 
$$
\V:=\left( \V_a \in \RR^{n_j \times d_i} \mid a \in \ar_{ij}, i \in [k], j\in [m] \right)
$$
be a tuple of matrices. We also view $\V$ as a representation of $\Q$ and refer to $(\V, \tup)$ as a quiver datum. Following \cite{ChiDer-2021}, we define the \emph{capacity} of $(\V, \tup)$ as
\begin{equation} \label{capacity-eqn}
\capa_\Q(\V,\tup)=\inf \left \{ {\prod_{i=1}^k \det\left( \sum_{j=1}^m p_j \left(\sum_{a \in \ar_{ij}}V_a^{\top}\cdot Y_j \cdot V_a \right) \right) \over \prod_{j=1}^m \det(Y_j)^{p_j}} \;\middle|\; Y_j \in \pd_{n_j} \right \},
\end{equation}
where $\pd_{n_j}$ denotes the set of all real positive definite $n_j\times n_j$ matrices.

The \emph{quiver BL constant} associated to $(\V, \tup)$ is defined as
\begin{equation}\label{BL-quiver-eqn}
\bl_\Q(\V, \tup)={1 \over \sqrt{\capa_\Q(\V, \tup)}}.
\end{equation}
We point out that when $k=1$ and  all $\ar_{1j}$ are singletons, $\bl_\Q(\V, \tup)$ is the celebrated BL constant associated to $(\V, \tup)$; see \cite{BenCarChrTao-2008} for a systematic study of those constants. When all $\ar_{ij}$ are singletons but $k$ is arbitrary, the logarithm of $\bl_{\Q}(\V, \tup)$ coincides  with the optimal constant in the Anantharam-Jog-Nair inequality associated to $(\V, \tup)$; see \cite{AJN-2022} and \cite{ChiDer-2023}.  

We say that $(\V, \tup)$ is \emph{feasible} if $\capa_\Q(\V, \tup)>0$ or, equivalently, $\bl_\Q(\V, \tup)< \infty$. We say that $(\V, \tup)$ is \emph{extremizable} if it is feasible and the minimum in $(\ref{capacity-eqn})$ is attained for some $Y=(Y_1, \ldots, Y_m) \in \prod_{j=1}^m \pd_{n_j}$. We call any such tuple $Y$ an \emph{extremizer} for $(\V, \tup)$.  

The BL constants (classical or quiver generalization) are very difficult to compute with only a very few explicit examples known (see for example \cite[Section 4]{BennettBezCowlingFlock2017}). In the classical case ($k=1$ and all $\ar_{1j}$ singletons), the Brascamp-Lieb constant restricted to the set of feasible data is known to be continuous; moreover, as briefly remarked in \cite[pp. 3]{Ben-Bez-Flo-Lee-2018}, it is analytic when restricted further to the set of simple data. For general bipartite quivers $\Q$ and exponents $\tup \in \RR^m_{\geq 0}$, it is proved in \cite[Theorem 3.6]{BezGauTsu-2026} that $\capa_\Q(-, \tup)$ is a locally H{\"o}lder continuous function on the set of feasible data.

In this paper, we study algebraic properties of $\capa_\Q(-, \tup)$ for rational $\tup \in \QQ^m_{>0}$, confirming a conjecture of the second-named author.

\begin{theorem}\label{thm:main} Assume that $\tup \in \QQ^m_{>0}$, and let
$$
\s:=\left \{ \V \in \prod_{(i,j) \in [k] \times [m]} ~  \prod_{a \in \ar_{ij}} \RR^{n_j \times d_i} \;\middle|\; (\V, \tup) \text{ is feasible} \right\}.
$$
Then $\capa_\Q(-, \tup): \s \to \RR$ is a semi-algebraic function. Consequently, it is an algebraic function in the sense that there exists a non-zero polynomial $P$ in $1+\sum_{i,j}|\ar_{ij}|d_i n_j$ variables such that 
$$
P(\V,\capa_\Q(\V, \tup))=0 \qquad \text{for all } \V \in \s.
$$
In particular, the classical BL constant is a semi-algebraic, hence algebraic, function on the set of feasible data.
\end{theorem}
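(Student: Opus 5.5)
The plan is to write the graph of $\capa_\Q(-,\tup)$ on $\s$ as a first-order formula over the ordered field $\RR$, and then apply the Tarski--Seidenberg theorem (quantifier elimination). The only inputs that are not obviously polynomial are the rational powers $\det(Y_j)^{p_j}$ and the condition $Y_j\in\pd_{n_j}$, so I deal with these first.

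First, I choose a common denominator and write $p_j=q_j/N$ with $q_j,N\in\ZZ_{>0}$. The set $\pd_{n_j}$ is semi-algebraic: it is cut out by positivity of the leading principal minors of a symmetric matrix. For a given $Y$, the quantity $t_j=\det(Y_j)^{p_j}$ is the unique $t_j>0$ with $t_j^N=\det(Y_j)^{q_j}$. Likewise, $s_i:=\det\bigl(\sum_j p_j\sum_{a\in\ar_{ij}}V_a^\top Y_jV_a\bigr)$ is a polynomial in the entries of $(\V,Y)$.

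With these, the relation
$$
F(\V,Y)=c, \qquad\text{where } F(\V,Y)={\prod_{i=1}^k \det\left( \sum_{j=1}^m p_j \left(\sum_{a \in \ar_{ij}}V_a^{\top}\cdot Y_j \cdot V_a \right) \right) \over \prod_{j=1}^m \det(Y_j)^{p_j}},
$$
is expressed by the formula $\exists t_1,\dots,t_m>0:\ t_j^N=\det(Y_j)^{q_j}$ for all $j$, and $c\prod_j t_j=\prod_i s_i$. Hence $F$ is a semi-algebraic function on $(\text{all data})\times\prod_j\pd_{n_j}$.

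Next, the infimum is first-order definable. The relation $c=\capa_\Q(\V,\tup)$ holds if and only if
$$
\forall Y\in\textstyle\prod_j\pd_{n_j}:\ F(\V,Y)\ge c \quad\text{and}\quad \forall \varepsilon>0\ \exists Y\in\textstyle\prod_j\pd_{n_j}:\ F(\V,Y)<c+\varepsilon.
$$
The infimum is always finite, because $F\ge 0$. By Tarski--Seidenberg, the graph $\Gamma$ of $\capa_\Q(-,\tup)$ on the whole space of data is therefore semi-algebraic. Intersecting with $\{c>0\}$ shows that $\s$, being the projection of $\Gamma\cap\{c>0\}$, is semi-algebraic, and that the restriction of $\capa_\Q(-,\tup)$ to $\s$ has semi-algebraic graph. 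This proves the first assertion.

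For algebraicity, let $D=\sum_{i,j}|\ar_{ij}|d_in_j$. The graph $\Gamma_\s\subset\RR^{D+1}$ of a semi-algebraic function on $\s\subseteq\RR^D$ has semi-algebraic dimension $\dim\s\le D$: projection to the first $D$ coordinates is a bijection onto $\s$, and semi-algebraic dimension is preserved under semi-algebraic bijections. The Zariski closure of a semi-algebraic set has the same dimension as the set. So the Zariski closure of $\Gamma_\s$ is a proper algebraic subset of $\RR^{D+1}$, and any nonzero polynomial $P$ in its defining ideal satisfies $P(\V,\capa_\Q(\V,\tup))=0$ on $\s$. The classical case is the specialization $k=1$ with all $\ar_{1j}$ singletons, and there $\bl=\capa^{-1/2}$ is again semi-algebraic.

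The step I expect to need the most care is handling the rational exponents and checking that every ingredient is genuinely first-order over $\RR$: quantification over $\pd_{n_j}$ and the $\varepsilon$-characterization of the infimum. With $\tup$ rational this is routine. It is exactly where rationality is needed, since for irrational $p_j$ the function $\det(Y_j)^{p_j}$ is not semi-algebraic. The dimension facts used in the last step are standard results from real algebraic geometry, and I would cite them rather than reprove them.
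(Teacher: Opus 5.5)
Your proof is correct, and it takes a genuinely different and much shorter route than the paper.

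\textbf{Your route.} Once $p_j=q_j/N$ is rational, the objective $F(\V,Y)$ is a semi-algebraic function on $\RR^{D_1}\times\prod_j\pd_{n_j}$. The infimum over $Y$ is first-order definable, so Tarski--Seidenberg yields the semi-algebraicity of $\capa_\Q(-,\tup)$ directly. This is the standard fact that a pointwise infimum of a semi-algebraic family is semi-algebraic wherever it is finite.

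\textbf{The paper's route.} The paper never handles the infimum directly; it builds the result in stages.
\begin{enumerate}
\item It proves that geometric data have capacity one (Theorem~\ref{thm:geom-cap-one}).
\item It deduces that on the extremizable locus $\locus=\pi_1(\X)$ the capacity equals $\prod_i\det(M_i)/\prod_j\det(Y_j)^{p_j}$ at \emph{any} point of the fiber of $\X$ (Theorem~\ref{thm:main-thm-cap-fiber}).
\item It uses the Definable Choice Property to select such a $Y$ semi-algebraically (Theorem~\ref{thm:main-2}).
\item It reduces arbitrary feasible data to extremizable data (Theorem~\ref{thm:partition-semi-alg}). This step uses Jordan--H\"older filtrations of $\tup$-semi-stable representations, the invariant-theoretic characterizations in Theorem~\ref{thm:semi-stable-summary} (which rely on Kempf--Ness), a one-parameter degeneration, and continuity of the capacity.
\item It applies Definable Choice a second time.
\end{enumerate}

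\textbf{What each route buys.} Your argument needs none of this machinery: no Kempf--Ness, no Jordan--H\"older filtrations, no continuity result from the literature, no Definable Choice. It also gives semi-algebraicity of the capacity on the whole data space, with value $0$ off $\s$. The paper's route instead provides structural information that your proof does not:
\begin{itemize}
\item the explicit extremizer formula for the capacity;
\item the characterization of extremizable data as $\GL$-translates of geometric data;
\item the degeneration of any feasible datum to a geometric one;
\item the validity of Theorems~\ref{thm:geom-cap-one} and \ref{thm:main-thm-cap-fiber} for arbitrary real $\tup$.
\end{itemize}
For the statement itself, however, your argument is complete. Your final step (dimension of the graph and its Zariski closure) is the same as the paper's.
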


We prove Theorem \ref{thm:main} in two steps. First, we show in Theorem \ref{thm:main-thm-cap-fiber} that for each extremizable datum $(\V, \tup)$, the capacity of $(\V,\tup)$ can be written as a ratio of determinant expressions evaluated at any point in the fiber over $\V$ of a semi-algebraic family. This result allows us to use the Definable Choice Property from real algebraic geometry. Under the rationality assumption on $\tup$, this leads to the semi-algebraicity of $\capa_\Q(-, \tup)$ on the set of extremizable data, as proved in Theorem \ref{thm:main-2}. In the final step,  we prove in Theorem \ref{thm:partition-semi-alg} that for each feasible datum $(\V, \tup)$, the computation of $\capa_\Q(\V, \tup)$ reduces to evaluating the capacity at any extremizable datum lying in the fiber over $\V$ of a second semi-algebraic family.  Together with Theorem \ref{thm:main-2},  this description of the capacity yields the desired result.

We point out that Theorem \ref{thm:main} does not hold without the rationality assumption on $\tup$ (see Example \ref{non-algebraic-example} for details).

\medskip
\noindent\textbf{Terminology.} A subset of $\RR^D$ is called \emph{semi-algebraic} if it is a finite union of sets defined by finitely many polynomial equalities and strict inequalities. A map between semi-algebraic sets is called semi-algebraic if its graph is semi-algebraic. A function $F:X\to \RR$, with $X\subseteq \RR^D$, is called \emph{algebraic} if there exists a nonzero polynomial $P(x,t)\in \RR[x_1,\dots,x_D,t]$ such that $P(x,F(x))=0$ for all $x\in X$. It is well known that every semi-algebraic function $F:X\to \mathbb{R}$ is algebraic; see the proof of Theorem \ref{thm:main-2} for an argument in our setting, which applies verbatim to any semi-algebraic function.

\section{Computing the capacity along semi-algebraic families}

We fix $\dd=(d_1, \ldots, d_k, n_1, \ldots, n_m) \in \ZZ^{k+m}_{>0}$ and $\tup=(p_1, \ldots, p_m) \in \RR^m_{>0}$ such that $\sum_{i=1}^k d_i=\sum_{j=1}^m p_j n_j$. Although our main theorem requires the exponents $p_j$ be rational, in this section we allow them to be arbitrary positive real numbers.

\subsection{Geometric data} We recall from \cite{ChiDer-2021} that $(\V, \tup)$ is a \emph{geometric quiver BL datum} if 

\begin{equation}\label{eq:geom1}
\sum_{j=1}^{m} p_j \sum_{a\in \ar_{ij}} \V_a^{\top} \V_a = I_{d_i},
\qquad \forall\, i\in [k],
\end{equation}
and
\begin{equation}\label{eq:geom2}
\sum_{i=1}^{k}\ \sum_{a\in \ar_{ij}} \V_a \V_a^{\top} = I_{n_j},
\qquad \forall\, j\in [m].
\end{equation}

We point out that when $\tup \in \QQ_{>0}^m$, the capacity $\capa_\Q(\V, \tup)$ is related to the capacity of a certain completely positive operator via \cite{ChiDer-2021}. Combined with \cite[Prop. 2.8 and Lemma 3.4]{GarGurOliWig-2020}, this proves that the capacity of a geometric datum is equal to one in the rational case. Here we explain how to extend this result to arbitrary weights $\tup \in \RR^{m}_{>0}$ using only elementary arguments. 

\begin{theorem}\label{thm:geom-cap-one}
If $(\V,\tup)$ is a geometric quiver BL datum, then
\[
\capa_\Q(\V,\tup)=1.
\]
\end{theorem}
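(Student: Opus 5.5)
We prove the two inequalities $\capa_\Q(\V,\tup)\le 1$ and $\capa_\Q(\V,\tup)\ge 1$ separately.

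\emph{Upper bound.} Take $Y_j=I_{n_j}$ for all $j$. By \eqref{eq:geom1}, the $i$-th determinant in the numerator of \eqref{capacity-eqn} is $\det(I_{d_i})=1$, and the denominator is also $1$. Hence $\capa_\Q(\V,\tup)\le 1$.

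\emph{Lower bound.} Fix $Y_j\in\pd_{n_j}$. For each $i$, set $M_i=\sum_j p_j\sum_{a\in\ar_{ij}}V_a^\top Y_j V_a$. By \eqref{eq:geom1}, $M_i$ is a convex combination, with weights $p_j V_a^\top V_a\succeq 0$ summing to $I_{d_i}$, of the matrices $Y_j$ conjugated by the partial isometries coming from $V_a$.

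The plan is to use concavity of $\log\det$. We diagonalize $Y_j=\sum_{\ell}\lambda_{j\ell}u_{j\ell}u_{j\ell}^\top$ with orthonormal eigenvectors $u_{j\ell}$ and eigenvalues $\lambda_{j\ell}>0$. Then
\[
M_i=\sum_{j,a,\ell} p_j\lambda_{j\ell}\, w_{a\ell}w_{a\ell}^\top, \qquad w_{a\ell}:=V_a^\top u_{j\ell},
\]
and, by \eqref{eq:geom1}, the unweighted sum satisfies $\sum_{j,a,\ell}p_j w_{a\ell}w_{a\ell}^\top=I_{d_i}$. Write $c^{(i)}_{a\ell}:=p_j\,w_{a\ell}^\top w_{a\ell}=p_j\,u_{j\ell}^\top V_aV_a^\top u_{j\ell}$. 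Taking traces gives $\sum_{j,a,\ell}c^{(i)}_{a\ell}=d_i$.

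The key inequality we aim for is
\[
\log\det M_i\ \ge\ \sum_{j,a,\ell} c^{(i)}_{a\ell}\log\lambda_{j\ell}.
\]
This is a weighted AM--GM / Jensen inequality for a resolution of the identity into rank-one positive pieces: if $\sum_r x_rx_r^\top=I$, then $\log\det\big(\sum_r\mu_r x_rx_r^\top\big)\ge\sum_r |x_r|^2\log\mu_r$. To prove it, restrict to $\det$ and note that $(\log\mu_r)\mapsto\log\det\sum_r e^{\log\mu_r}x_rx_r^\top$ is convex in $\log\mu$. Alternatively, use $\log\det M=\operatorname{tr}\log M\ge\sum_r \langle x_r,(\log M)x_r\rangle$ together with operator concavity of $\log$ in the form $\langle x,\log(M)x\rangle\ge |x|^2\log(\langle x,Mx\rangle/|x|^2)$, and then compare with $\mu_r$. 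The first route, the convexity/Jensen argument in the directions of the vectors $x_r$, is the one to try first, and it is the main obstacle. A cleaner fallback is the standard Ball/Barthe argument: by Lieb's theorem, geometric Brascamp--Lieb data have constant $1$, reduced via the tensor trick to rank-one pieces. Here, with $|\ar_{ij}|$ arbitrary and several $i$, the rank-one decomposition above makes the argument uniform.

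\emph{Summation.} Summing the key inequality over $i$ and using \eqref{eq:geom2},
\[
\sum_i\sum_{a\in\ar_{ij}}u_{j\ell}^\top V_aV_a^\top u_{j\ell}=1,
\]
we get
\[
\sum_i\log\det M_i\ \ge\ \sum_j p_j\sum_\ell\log\lambda_{j\ell}=\sum_j p_j\log\det Y_j .
\]
Exponentiating gives that the ratio in \eqref{capacity-eqn} is at least $1$ for every choice of $Y$, so $\capa_\Q(\V,\tup)\ge 1$, completing the proof.
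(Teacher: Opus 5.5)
Your reduction matches the paper's. Expanding each $Y_j$ in its eigenbasis to get $w_{a\ell}=V_a^\top u_{j\ell}$ is the paper's Case~2, with new sinks $(\ell,j)$ and $\widetilde V_{(\ell,a)}=u_{\ell j}^\top V_a$. Your final summation via \eqref{eq:geom2} plays the role of the paper's identity $\sum_\ell a_{\ell j}=\operatorname{tr}(R_j)=p_j$, and the bookkeeping is correct. The gap is that you never prove the one inequality that carries all the content,
\[
\det\Big(\sum_r \mu_r\, x_r x_r^\top\Big)\ \ge\ \prod_r \mu_r^{|x_r|^2}\qquad\text{whenever}\quad \sum_r x_r x_r^\top=I_{d_i},\ \mu_r>0.
\]
This is exactly the paper's Case~1, and you leave it as ``the main obstacle.''

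Your second route fails as written. Here $\operatorname{tr}\log M=\sum_r\langle x_r,(\log M)x_r\rangle$ is an identity, not an inequality. The scalar Jensen bound for the concave function $\log$ goes the other way, $\langle x,(\log M)x\rangle\le |x|^2\log\big(\langle x,Mx\rangle/|x|^2\big)$, so it gives an upper bound on $\log\det M$, not a lower one.

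Your first route does work, but it needs two steps you did not supply:
\begin{enumerate}
\item Convexity of $f(s)=\log\det\sum_r e^{s_r}x_rx_r^\top$. By Cauchy--Binet, $\det\sum_r e^{s_r}x_rx_r^\top=\sum_{|S|=d_i}\det(X_S)^2\, e^{\sum_{r\in S}s_r}$, where $X_S$ has columns $x_r$, $r\in S$. So $f$ is a log-sum-exp of affine functions of $s$.
\item The supporting-hyperplane inequality at $s=0$. There $f(0)=\log\det I_{d_i}=0$ and $\partial_{s_r}f(0)=\operatorname{tr}(x_rx_r^\top)=|x_r|^2$, so $f(s)\ge\sum_r |x_r|^2 s_r$.
\end{enumerate}

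The paper closes this step more directly by diagonalizing $M$ itself, not only the $Y_j$. Write $M=\sum_s\sigma_s v_sv_s^\top$ with $\{v_s\}$ orthonormal. Then $\sigma_s=v_s^\top M v_s=\sum_r\mu_r\langle v_s,x_r\rangle^2$. The weights $\langle v_s,x_r\rangle^2$ sum over $r$ to $v_s^\top I v_s=1$ and over $s$ to $|x_r|^2$. Weighted AM--GM gives $\sigma_s\ge\prod_r\mu_r^{\langle v_s,x_r\rangle^2}$, and multiplying over $s$ yields the displayed inequality.

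With either completion inserted, your argument becomes a full proof. Citing Ball's determinant inequality would also close it, but then the only nontrivial step is imported rather than proved.
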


\begin{proof}
We clearly have that $\capa_\Q(\V,\tup)\le 1$, since $(\V,\tup)$ is geometric. It remains to prove the reverse inequality. For any $Y=(Y_1,\dots,Y_m)\in \prod_{j=1}^m \pd_{n_j}$, we will show that
\begin{equation}\label{eq:main-ineq}
\prod_{i=1}^{k} \det\!\Bigg(\sum_{j=1}^{m} p_j
\Big(\sum_{a\in \ar_{i,j}} \V_a^{\top}Y_j \V_a\Big)\Bigg)
\ \ge\ 
\prod_{j=1}^{m} \det(Y_j)^{p_j}.
\end{equation}

We distinguish two cases.

\medskip
\noindent\textbf{Case 1:} $n_j=1$ for all $j\in [m]$.
In this case, we write $t_j>0$ for $Y_j$ and consider
\begin{equation}\label{eq:Rdef}
R(t_1,\dots,t_m):=\sum_{j=1}^{m} t_j R_j.
\end{equation}
where
\[
R_j=\diag\!\Big(p_j\!\!\sum_{a\in \ar_{1j}} \V_a^{\top} \V_a,\ \dots,\ 
p_j\!\!\sum_{a\in \ar_{kj}} \V_a^{\top} \V_a\Big),
\qquad \text{for every } j\in [m].
\]
Then $R(t_1,\dots,t_m)\in \mathbb{R}^{D\times D}$ is positive semi-definite,  where $D:=d_1+\cdots+d_k$, and its determinant is precisely the left-hand side of $(\ref{eq:main-ineq})$. Next consider the spectral decomposition of $R(t_1,\dots,t_m)$:
\begin{equation}\label{eq:spectralR}
R(t_1,\dots,t_m)=\sum_{\ell=1}^{D} \sigma_\ell\, u_\ell u_\ell^{\top},
\end{equation}
where $\sigma_1,\dots,\sigma_D$ are the eigenvalues of $R(t_1,\dots,t_m)$ and $\{u_1,\dots,u_D\}$ is an orthonormal basis of $\mathbb{R}^D$. From \eqref{eq:Rdef} and \eqref{eq:spectralR} we get
\[
\sigma_\ell=\sum_{j=1}^{m} t_j a_{\ell j},
\qquad\text{where}\qquad
a_{\ell j}:=u_\ell^{\top}R_j u_\ell\ge 0,
\]
since each $R_j$ is positive semidefinite. We also have
\begin{equation}\label{eq:a-sum1}
\begin{aligned}
\sum_{j=1}^{m} a_{\ell j}
&=u_\ell^{\top}\Big(\sum_{j=1}^{m} R_j\Big)u_\ell
\overset{\eqref{eq:geom1}}{=}
u_\ell^{\top}\diag(I_{d_1},\dots,I_{d_k})u_\ell
=1,
\qquad \forall\, \ell\in [D],
\end{aligned}
\end{equation}
and
\begin{equation}\label{eq:a-sum2}
\begin{aligned}
\sum_{\ell=1}^{D} a_{\ell j}
&=\tr(R_j)
\overset{\eqref{eq:geom2}}{=}
p_j,
\qquad \forall\, j\in [m].
\end{aligned}
\end{equation}
Finally, the left-hand side of \eqref{eq:main-ineq} equals
\[
\det\big(R(t_1,\dots,t_m)\big)
=\prod_{\ell=1}^{D} \sigma_\ell
=\prod_{\ell=1}^{D}\Big(\sum_{j=1}^{m} t_j a_{\ell j}\Big)
\ge \prod_{\ell=1}^{D}\prod_{j=1}^{m} t_j^{a_{\ell j}}
= \prod_{j=1}^{m} t_j^{\sum_{\ell=1}^{D} a_{\ell j}}
= \prod_{j=1}^{m} t_j^{p_j},
\]
where the inequality follows from \eqref{eq:a-sum1}, \eqref{eq:a-sum2}, and the weighted AM--GM inequality. This proves \eqref{eq:main-ineq} in Case~1.

\medskip
\noindent\textbf{Case 2:} $n_1, \ldots, n_m$ are arbitrary positive integers. In what follows, we explain how to reduce \eqref{eq:main-ineq} to Case~1. We begin by considering the spectral decomposition of each $Y_j$:
\[
Y_j=\sum_{\ell=1}^{n_j} t_{\ell j}\, u_{\ell j} u_{\ell j}^{\top},
\]
where $t_{\ell j}>0$, $\ell\in [n_j]$, are the eigenvalues of $Y_j$ and
$\{u_{\ell j}\}_{\ell \in [n_j]}$ forms an orthonormal basis of $\mathbb{R}^{n_j}$.
Then, for any $a\in \ar_{i,j}$ we can write
\begin{equation}\label{eq:expand}
\V_a^{\top}Y_j \V_a
=\sum_{\ell=1}^{n_j} t_{\ell j}\,\big(u_{\ell j}^{\top}\V_a\big)^{\top}\big(u_{\ell j}^{\top}\V_a\big).
\end{equation}

These considerations lead us to a new bipartite quiver $\widetilde{\Q}$ obtained from $\Q$ by replacing each sink vertex $j$ with $n_j$ new sinks $(1,j),\dots,(n_j,j)$. Moreover, the set of arrows from a source $i$ to a sink $(\ell,j)$ is $\{(\ell,a)\mid a\in \ar_{ij}\}$. Furthermore, $\V$ gives rise to a representation $\widetilde{\V}$ of $\widetilde{\Q}$ which is $\RR$ at each sink vertex $(\ell,j)$, and along an arrow $(\ell,a)$, with $\ell \in [n_j]$ and $a \in \ar_{ij}$,
\[
\widetilde{V}_{(\ell,a)}:=u_{\ell j}^{\top}V_a.
\]
Since $(\V, \tup)$ is a geometric datum and $\{u_{\ell j}\}_{\ell \in [n_j]} \subseteq \mathbb{R}^{n_j}$ is an orthonormal basis for every $j \in [m]$,  one immediately checks that $(\widetilde{V},\widetilde{p})$ is geometric as a quiver datum over $\widetilde{Q}$, where
\[
\widetilde{p}_{(\ell,j)}:=p_j,
\qquad \forall\, \ell\in [n_j],\ \forall\, j\in [m].
\]
Applying Case~1 to $(\widetilde{\V}, \widetilde{\tup})$ and using  \eqref{eq:expand}, we obtain
\[
\prod_{i=1}^{k} \det\!\Bigg(\sum_{j=1}^{m} p_j
\Big(\sum_{a\in \ar_{ij}} V_a^{\top}Y_jV_a\Big)\Bigg)
\ge
\prod_{j=1}^{m}\ \prod_{\ell=1}^{n_j} t_{\ell j}^{p_j}
=
\prod_{j=1}^{m} \det(Y_j)^{p_j}.
\]
This completes the proof.
\end{proof}

As a consequence of Theorem \ref{thm:geom-cap-one}, we obtain the following formula which plays an important role in the proof of Theorem \ref{thm:main-thm-cap-fiber}.

\begin{corollary}\label{coro:cap-geom-formula}
Let $(\V, \tup)$ be a quiver datum, and let $g_i\in \GL(d_i)$, $i \in [k]$, and $h_j \in \GL(n_j)$, $j \in [m]$, be such that 
\begin{equation}\label{eqn:conj-geom-datum}
\Big( \Big( h_j V_a g_i^{-1} \mid a \in \ar_{ij}, i \in [k], j\in [m]\Big), \tup \Big)
\end{equation} is a geometric datum. Then
\begin{equation}\label{eqn:capa-formula-transf-geom}
\capa_\Q(\V, \tup)={\prod_{i=1}^k \det(g_i)^2 \over \prod_{j=1}^m \det(h_j)^{2p_j}}.
\end{equation}
Furthermore, $(\V, \tup)$ is extermizable with extermizers $\left( h_j^\top h_j\right)_{j \in [m]}$.
\end{corollary}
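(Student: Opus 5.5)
The plan is to compute how the capacity functional transforms under the change of basis $\V \mapsto W:=(h_j V_a g_i^{-1})$, and then apply Theorem~\ref{thm:geom-cap-one} to $W$. Fix $Y=(Y_j)\in\prod_j \pd_{n_j}$ and set $Z_j:=h_j^{-\top} Y_j h_j^{-1}\in \pd_{n_j}$. This substitution is a bijection of $\prod_j\pd_{n_j}$, with inverse $Y_j=h_j^\top Z_j h_j$. For $a\in\ar_{ij}$ we have
\[
V_a^\top Y_j V_a = V_a^\top h_j^\top Z_j h_j V_a = g_i^\top W_a^\top Z_j W_a\, g_i .
\]
Summing over $j$ and $a$ with weights $p_j$ and taking determinants gives
\[
\det\Big(\sum_j p_j\sum_{a\in\ar_{ij}} V_a^\top Y_jV_a\Big)=\det(g_i)^2\det\Big(\sum_j p_j\sum_{a\in\ar_{ij}} W_a^\top Z_jW_a\Big).
\]
Similarly, $\det(Y_j)^{p_j}=\det(h_j)^{2p_j}\det(Z_j)^{p_j}$.

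It follows that the ratio in \eqref{capacity-eqn} for $(\V,\tup)$ at $Y$ equals $\prod_i\det(g_i)^2/\prod_j\det(h_j)^{2p_j}$ times the corresponding ratio for $(W,\tup)$ at $Z$. Taking the infimum over $Y$, which is the same as the infimum over $Z$, yields
\[
\capa_\Q(\V,\tup)=\frac{\prod_i\det(g_i)^2}{\prod_j\det(h_j)^{2p_j}}\,\capa_\Q(W,\tup).
\]
Theorem~\ref{thm:geom-cap-one} gives $\capa_\Q(W,\tup)=1$, which proves \eqref{eqn:capa-formula-transf-geom}. Since the right-hand side is positive, $(\V,\tup)$ is feasible.

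For extremizability, note that the infimum for $W$ is attained at $Z_j=I_{n_j}$. Indeed, by \eqref{eq:geom1} the numerator there is $\prod_i\det(I_{d_i})=1$, and the denominator is $1$, so the ratio is $1=\capa_\Q(W,\tup)$. Under the correspondence above, $Z_j=I_{n_j}$ is sent to $Y_j=h_j^\top h_j$. Hence $(h_j^\top h_j)_j$ attains the infimum for $(\V,\tup)$, and so it is an extremizer.

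There is no real obstacle here. The only point needing care is the bookkeeping of transposes in the substitution $Y_j=h_j^\top Z_jh_j$, so that the $g_i$ factor out as $g_i^\top(\cdot)g_i$ and contribute $\det(g_i)^2$.
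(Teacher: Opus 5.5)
Your proposal is correct and follows essentially the same route as the paper: the change of variables $Y_j=h_j^\top Z_j h_j$ (the paper's substitution $\widetilde Y_j = h_j^\top Y_j h_j$) factors out $\prod_i\det(g_i)^2/\prod_j\det(h_j)^{2p_j}$, and Theorem~\ref{thm:geom-cap-one} gives capacity one for the geometric datum. For the extremizer, the paper computes $M_i=g_i^\top g_i$ directly at $Y_j=h_j^\top h_j$, which is equivalent to your transport of the obvious extremizer $Z_j=I_{n_j}$. Your explicit remark that the formula being positive gives feasibility is a small point the paper leaves implicit.
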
 

\begin{proof} Since $(\ref{eqn:conj-geom-datum})$ is a geometric datum, it follows from Theorem \ref{thm:geom-cap-one} that
\begin{align*}
1
&=\inf\left\{
\frac{
\displaystyle\prod_{i=1}^{k}
\det\!\left(
\sum_{j=1}^{m} p_j
\sum_{a\in \ar_{ij}}
g_i^{-\top}\,\V_a^{\top}\,h_j^{\top}\,Y_j\,h_j\,\V_a\,g_i^{-1}
\right)
}{
\displaystyle\prod_{j=1}^{m}\det(Y_j)^{p_j}
}
\;\middle|\;
Y_j\in \pd_{n_j}
\right\}.
\end{align*}
\noindent
Substituting $\widetilde Y_j$ for $h_j^{T}Y_j h_j$ in the infimum above,  we get
\begin{align*}
1
&=\inf\left\{
\frac{
\displaystyle
\left(\prod_{i=1}^{k}\det(g_i)^{-2}\right)
\left(\prod_{i=1}^{k}\det\!\left(
\sum_{j=1}^{m} p_j
\sum_{a\in \ar_{ij}}
\V_a^{\top}\,\widetilde Y_j\,\V_a
\right)\right)
}{
\displaystyle
\left(\prod_{j=1}^{m}\det(h_j)^{-2p_j}\right)
\left(\prod_{j=1}^{m}\det(\widetilde Y_j)^{p_j}\right)
}
\;\middle|\;
\widetilde Y_j\in \pd_{n_j}
\right\},
\end{align*}
\noindent
and so
\begin{equation*}
1
=
\frac{\displaystyle\prod_{i=1}^{k}\det(g_i)^{-2}}
{\displaystyle\prod_{j=1}^{m}\det(h_j)^{-2p_j}}
\;\capa_Q(\V,p).
\end{equation*}

\noindent
This now implies $(\ref{eqn:capa-formula-transf-geom})$. To prove the last part of the corollary, define $Y_j:=h_j^\top h_j \in \pd_{n_j}$ for every $j \in [m]$, and 
$$
M_i:=\sum_{j=1}^m p_j \left( \sum_{a \in \ar_{ij}} V_a^{\top} Y_j V_a \right) \qquad \text{for every } i \in [k].
$$
Then, using $(\ref{eq:geom1})$ and $(\ref{eqn:conj-geom-datum})$, we obtain that
$$
M_i=g_i^\top g_i \qquad \text{for every } i \in [k].
$$
Thus
$$
\capa_\Q(\V, \tup)=\frac{\prod_{i=1}^k \det(M_i)}{\prod_{j=1}^m \det(Y_j)^{p_j}},
$$
which proves that $\left( Y_j \right)_{j \in [m]}$ is an extremizer for $(\V, \tup)$.

\end{proof}

\subsection{Extremizable data} Our goal in this section is to show that the capacity of quiver data can be computed by selecting convenient points in the fibers of a semi-algebraic family. 

Set $D_1=\sum_{i,j} |\ar_{ij}| d_i n_j$ and $D_2=\sum_{j=1}^m n_j^2$. Let $\X \subseteq \RR^{D_1+D_2}$ be the semi-algebraic set consisting of all pairs $(\V, \Y)$ with $\V=(V_a) \in \prod_{(i, j) \in [k] \times [m]} ~ \prod_{a \in \ar_{ij}} \RR^{n_j \times d_i}$ and $\Y=(Y_j) \in \prod_{j=1}^m \pd_{n_j}$ such that
\begin{equation}\label{eqn:eqn1-defn-locus}
M_i:=\sum_{j=1}^m p_j \left( \sum_{a \in \ar_{ij}} V_a^{\top} Y_j V_a \right) \qquad \text{is inverible for all } i \in [k],
\end{equation}
and
\begin{equation}\label{eqn:eqn2-defn-locus}
\sum_{i=1}^k \sum_{a \in \ar_{ij}} V_a M_i^{-1} V_a^{\top}=Y_j^{-1}, \qquad \text{for all } j \in [m].
\end{equation}
Let $\pi_1:\RR^{D_1+D_2} \to \RR^{D_1}$ be the projection onto $\RR^{D_1}$. We are now ready to prove the following useful formula for the capacity.

\begin{theorem}\label{thm:main-thm-cap-fiber}
Let 
$$
\locus:=\left \{ \V \in \prod_{(i,j) \in [k] \times [m]} ~ \prod_{a \in \ar_{ij}} \RR^{n_j \times d_i} \;\middle|\; (\V, \tup) \text{~is extremizable} \right \}.
$$
Then
\begin{equation}\label{extre-semi-alg}
\locus=\pi_1(\X),
\end{equation}
and thus $\locus$ is a semi-algebraic set. Furthermore, for any $\V \in \locus$ and $\Y=(Y_j)_j \in \RR^{D_2}$ such that $(\V, \Y) \in \X$,
\begin{equation}\label{eq:cap-extremizers}
\capa_\Q(\V, \tup)={\prod_{i=1}^k \det(M_i) \over \prod_{j=1}^m \det(Y_j)^{p_j} }.
\end{equation}
\end{theorem}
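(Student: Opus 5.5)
We prove both inclusions in \eqref{extre-semi-alg} and derive \eqref{eq:cap-extremizers} along the way. The key observation is that the defining equations \eqref{eqn:eqn1-defn-locus}--\eqref{eqn:eqn2-defn-locus} of $\X$ say exactly that, after suitable changes of basis, the datum becomes geometric. Corollary \ref{coro:cap-geom-formula} then gives the capacity formula directly.

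\emph{Inclusion $\pi_1(\X)\subseteq \locus$ and formula \eqref{eq:cap-extremizers}.} Let $(\V,\Y)\in\X$. Since $Y_j\in\pd_{n_j}$, the matrix $M_i$ is positive semidefinite, and by \eqref{eqn:eqn1-defn-locus} it is invertible, hence positive definite. Choose $g_i:=M_i^{1/2}$ and $h_j:=Y_j^{1/2}$, both symmetric. Then
\[
\sum_j p_j\sum_{a\in\ar_{ij}} (h_jV_ag_i^{-1})^\top(h_jV_ag_i^{-1})=g_i^{-1}M_ig_i^{-1}=I_{d_i},
\]
which is \eqref{eq:geom1}. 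Likewise,
\[
\sum_i\sum_{a\in\ar_{ij}} h_jV_ag_i^{-1}g_i^{-\top}V_a^\top h_j^\top=h_j\Big(\sum_i\sum_a V_aM_i^{-1}V_a^\top\Big)h_j=h_jY_j^{-1}h_j=I_{n_j},
\]
by \eqref{eqn:eqn2-defn-locus}, which is \eqref{eq:geom2}. So Corollary \ref{coro:cap-geom-formula} applies. It shows that $(\V,\tup)$ is extremizable, with extremizer $(h_j^\top h_j)=(Y_j)$, and that
\[
\capa_\Q(\V,\tup)=\prod_i\det(g_i)^2\Big/\prod_j\det(h_j)^{2p_j}=\prod_i\det M_i\Big/\prod_j\det(Y_j)^{p_j}.
\]
This is \eqref{eq:cap-extremizers}, and it holds for every $\Y$ in the fiber over $\V$.

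\emph{Inclusion $\locus\subseteq\pi_1(\X)$.} This is the main step. Let $\Y$ be an extremizer for $(\V,\tup)$. Feasibility gives $\capa_\Q(\V,\tup)>0$, so the numerator at $\Y$ is positive and each $M_i$ is invertible, which is \eqref{eqn:eqn1-defn-locus}. The objective
\[
F(\Y)=\sum_i\log\det M_i(\Y)-\sum_j p_j\log\det Y_j
\]
is smooth on the open set $\prod_j\pd_{n_j}$ near $\Y$, and $\Y$ is an interior minimum, so its gradient vanishes there. Take a symmetric direction $Z_j$ in the $j$-th slot. The derivative of $\log\det M_i$ is
\[
\tr\big(M_i^{-1}p_j\textstyle\sum_a V_a^\top Z_jV_a\big)=p_j\tr\big(Z_j\sum_a V_aM_i^{-1}V_a^\top\big),
\]
and the derivative of $p_j\log\det Y_j$ is $p_j\tr(Y_j^{-1}Z_j)$. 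Setting the difference to zero for every symmetric $Z_j$, and dividing by $p_j>0$, gives
\[
\sum_i\sum_a V_aM_i^{-1}V_a^\top=Y_j^{-1},
\]
because the left-hand side is symmetric. This is \eqref{eqn:eqn2-defn-locus}, so $(\V,\Y)\in\X$.

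Finally, $\X$ is semi-algebraic: invertibility of $M_i$ can be written as $\det M_i\neq0$; clearing denominators with adjugates turns \eqref{eqn:eqn2-defn-locus} into polynomial equations; and positive definiteness of $Y_j$ is given by the leading principal minors being positive. By Tarski--Seidenberg, $\locus=\pi_1(\X)$ is therefore semi-algebraic.
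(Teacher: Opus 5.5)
Your proof is correct and follows the paper's route. As in the paper, you obtain the capacity formula, and with it $\pi_1(\X)\subseteq\locus$, by setting $g_i=M_i^{1/2}$ and $h_j=Y_j^{1/2}$, checking that the transformed datum is geometric, and applying Corollary~\ref{coro:cap-geom-formula}. For $\locus\subseteq\pi_1(\X)$ the paper only points to the argument of \cite[Theorem 20]{ChiDer-2021}, whereas you write out the vanishing of the gradient of $\log$ of the objective at an interior minimizer. That is a valid self-contained substitute; the only detail to add is the linear condition $Y_j=Y_j^\top$ when you describe $\X$ via Sylvester's criterion.
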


\begin{remark}
When $\tup \in \QQ^n_{>0}$ is a rational weight, this was proved in \cite{ChiDer-2021} using Kempf-Ness theorem in invariant theory. But when $\tup$ has irrational coordinates, these tools are no longer available. Nonetheless, we give an elementary proof below that works in the general case. 
\end{remark}

\begin{proof}[Proof of Theorem \ref{thm:main-thm-cap-fiber}] To establish the inclusion $\locus \subseteq \pi_1(\X)$, one can use the same arguments as in the proof of the implication $``(\Longleftarrow)"$ of \cite[Theorem 20]{ChiDer-2021}.

It remains to show that for any $(\V, \Y) \in \X$, formula $(\ref{eq:cap-extremizers})$ holds. Set
\begin{equation}
g_i:=M_i^{1 \over 2} \in \GL(d_i) \text{ and } h_j:=Y_j^{1 \over 2} \in \GL(n_j),
\end{equation}
for all $i \in [k]$ and $j \in [m]$. Then it is straightforward to check that
\begin{equation}\label{claim:geom-data-from-extremizable}
\Big( \Big( h_j V_a g_i^{-1} \mid a \in \ar_{ij}, i \in [k], j\in [m]\Big), \tup \Big)
\end{equation}
is a geometric datum. Indeed,  since $\V$ and $Y$ satisfy $(\ref{eqn:eqn1-defn-locus})$ and $(\ref{eqn:eqn2-defn-locus})$, we obtain
\begin{align*}
\sum_{j=1}^{m}p_j\sum_{a\in \ar_{ij}}
g_i^{-\top}\,\V_a^{\top}\,h_j^{\top}Y_j h_j\,\V_a\,g_i^{-1}
&=
g_i^{-\top}\left(
\sum_{j=1}^{m}p_j\sum_{a\in \ar_{ij}}
\V_a^{\top}Y_j\V_a
\right)g_i^{-1}\notag\\
&=
g_i^{-\top} M_i\, g_i^{-1}=
M_i^{-1/2}\,M_i\,M_i^{-1/2}=\Id_{d_i} \qquad \text{for all } i \in [k].
\end{align*}

\noindent
Similarly, we have
\begin{align*}
\sum_{i=1}^k\sum_{a\in \ar_{ij}}
h_j\,\V_a\,g_i^{-1}\,g_i^{-\top}\,\V_a^{\top}\,h_j^{\top}
&=
h_j\left(
\sum_{i=1}^{k}\sum_{a\in \ar_{ij}}
\V_a\,M_i^{-1}\,\V_a^{\top}
\right)h_j^{\top}\notag\\
&=
h_j\,Y_j^{-1}\,h_j^{\top}=
Y_j^{1/2}\,Y_j^{-1}\,Y_j^{1/2}=\Id_{n_j} \qquad \text{for all } j \in [m].
\end{align*}
This shows that $(\ref{claim:geom-data-from-extremizable})$ is indeed a geometric datum. The desired formula $(\ref{eq:cap-extremizers})$ now follows from Corollary \ref{coro:cap-geom-formula}. 
\end{proof}

Let us record the following consequence of Corollary \ref{coro:cap-geom-formula} and Theorem \ref{thm:main-thm-cap-fiber}, which characterizes extremizable quiver data as precisely those that can be transformed into geometric data.

\begin{corollary}\label{coro:extremizable-geometric} Let $(\V, \tup)$ be a quiver datum. Then $(\V, \tup)$ is extremizable if and only if there exist transformations $g_i\in \GL(d_i)$, $i \in [k]$, and $h_j \in \GL(n_j)$, $j \in [m]$, such that $(W, \tup)$ is a geometric datum, where $W_a:=h_j V_a g_i^{-1}$ for all $a \in \ar_{ij}, i \in [k], j\in [m]$.
\end{corollary}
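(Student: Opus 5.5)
The corollary follows by combining the two directions already established: Corollary \ref{coro:cap-geom-formula} supplies the backward implication, and Theorem \ref{thm:main-thm-cap-fiber} (together with the computation in its proof) supplies the forward one.

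For the ``if'' direction, suppose $g_i\in\GL(d_i)$ and $h_j\in\GL(n_j)$ are given with $(W,\tup)$ geometric. Corollary \ref{coro:cap-geom-formula} then tells us two things. First, $\capa_\Q(\V,\tup)$ equals the ratio $\prod_i\det(g_i)^2/\prod_j\det(h_j)^{2p_j}$. This ratio is strictly positive because the $g_i,h_j$ are invertible, so $(\V,\tup)$ is feasible. Second, $(h_j^\top h_j)_j$ is an extremizer. Together these say exactly that $(\V,\tup)$ is extremizable.

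For the ``only if'' direction, suppose $(\V,\tup)$ is extremizable, so $\V\in\locus$. By \eqref{extre-semi-alg}, $\locus=\pi_1(\X)$, hence there is $\Y=(Y_j)_j\in\prod_j\pd_{n_j}$ with $(\V,\Y)\in\X$. Then each $M_i$ in \eqref{eqn:eqn1-defn-locus} is invertible. Since $Y_j$ is positive definite, $M_i$ is a sum of positive semidefinite matrices with positive coefficients, hence positive semidefinite; being invertible, it is positive definite. Therefore $g_i:=M_i^{1/2}\in\GL(d_i)$ and $h_j:=Y_j^{1/2}\in\GL(n_j)$ are well defined. The computation in the proof of Theorem \ref{thm:main-thm-cap-fiber}, which uses \eqref{eqn:eqn1-defn-locus} and \eqref{eqn:eqn2-defn-locus}, shows that $W_a=h_jV_ag_i^{-1}$ satisfies \eqref{eq:geom1} and \eqref{eq:geom2}. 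So $(W,\tup)$ is geometric, as required.

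The only substantive input is the inclusion $\locus\subseteq\pi_1(\X)$, which the paper takes from \cite[Theorem 20]{ChiDer-2021}. If one wanted to verify it directly, the natural route is as follows. Let $Y$ be an extremizer. Differentiating the logarithm of the objective in \eqref{capacity-eqn} at $Y$ in a direction $Z_j$ gives
$$\sum_i\tr\Big(M_i^{-1}p_j\sum_{a\in\ar_{ij}}V_a^\top Z_jV_a\Big)-p_j\tr(Y_j^{-1}Z_j)=0.$$
Since this holds for every symmetric $Z_j$, it yields \eqref{eqn:eqn2-defn-locus}. Invertibility of $M_i$ is automatic at a minimizer, since feasibility forces the determinants to be positive. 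This is the step I expect to carry the weight; everything else is direct substitution.
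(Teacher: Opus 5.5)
Your proof is correct and follows the paper's intended argument: the backward direction is exactly Corollary \ref{coro:cap-geom-formula}, and the forward direction uses $\locus=\pi_1(\X)$ from Theorem \ref{thm:main-thm-cap-fiber} together with the choice $g_i=M_i^{1/2}$, $h_j=Y_j^{1/2}$ made in its proof. Your extra details are also correct: positive definiteness of $M_i$ (so the square root makes sense), and the first-order optimality derivation of \eqref{eqn:eqn2-defn-locus} at an extremizer, which fills in what the paper leaves implicit or delegates to \cite{ChiDer-2021}.
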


\section{Proof of Theorem \ref{thm:main}}\label{sec:proof-main-thm}

In this final section, we first establish the semi-algebraicity of the restriction of the capacity to the set of extremizable data.  We then use the quiver invariant theoretic approach developed in \cite{ChiDer-2021} to reduce the general case to the extremizable case.

\subsection{The algebraicity of the capacity restricted to extremizable data} Let $d_1, \ldots, d_k$ and $n_1, \ldots, n_m$ be positive dimensions and $\tup=(p_1, \ldots, p_m) \in \RR^m_{>0}$ a tuple of positive exponents such such $\sum_{i=1}^k d_i=\sum_{j=1}^m p_j n_j$. Recall that 
\[
\locus=\left \{ \V \in \prod_{(i,j) \in [k] \times [m]}~\prod_{a \in \ar_{ij}} \RR^{n_j \times d_i} \;\middle|\; (\V, \tup) \text{~is extremizable} \right \}.
\]
We know from Theorem \ref{thm:main-thm-cap-fiber} that $\locus=\pi_1(\X)$, where $\X \subseteq \RR^{D_1+D_2}$ is the semi-algebraic family consisting of all pairs $(\V, \Y)$ satisfying $(\ref{eqn:eqn1-defn-locus})$ and $(\ref{eqn:eqn2-defn-locus})$, and $\pi_1:\RR^{D_1+D_2} \to \RR^{D_1}$ is the projection onto $\RR^{D_1}$. We are now ready to prove out first algebraicity result.

\begin{theorem}\label{thm:main-2} Assume that $\tup \in \QQ_{>0}^m$. Then $\capa_\Q(-, \tup): \locus \to \RR$ is a semi-algebraic function. Consequently, it is an algebraic function in the sense that there exists a non-zero polynomial $P$ in $1+\sum_{i,j}|\ar_{ij}|d_i n_j$ variables such that 
$$
P(\V,\capa_\Q(\V, \tup))=0 \qquad \text{for all } \V \in \locus.
$$
\end{theorem}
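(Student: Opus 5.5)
The plan is to exhibit the graph of $\capa_\Q(-,\tup)$ on $\locus$ as the projection of a semi-algebraic set, using the formula \eqref{eq:cap-extremizers} from Theorem \ref{thm:main-thm-cap-fiber}. Write $p_j=r_j/N$ with $r_j\in\ZZ_{>0}$ and a common denominator $N\in\ZZ_{>0}$. By Theorem \ref{thm:main-thm-cap-fiber}, for every $(\V,\Y)\in\X$ the value $c:=\capa_\Q(\V,\tup)$ is the positive number
$$
c=\frac{\prod_{i=1}^k \det(M_i)}{\prod_{j=1}^m \det(Y_j)^{p_j}}.
$$
Since each $\det(Y_j)>0$ and $c>0$, this is equivalent to the polynomial condition
$$
c^N\prod_{j=1}^m \det(Y_j)^{r_j}=\Big(\prod_{i=1}^k\det(M_i)\Big)^N,\qquad c>0 .
$$
Here the $M_i$ are polynomial in $(\V,\Y)$, and $\X$ itself is semi-algebraic: positive definiteness of $Y_j$ is given by leading principal minors, invertibility of $M_i$ by $\det M_i\neq 0$, and \eqref{eqn:eqn2-defn-locus} becomes polynomial after multiplying by $\det(M_i)$ via adjugates.

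Define $\Gamma'\subseteq\RR^{D_1}\times\RR^{D_2}\times\RR$ as the set of triples $(\V,\Y,c)$ with $(\V,\Y)\in\X$, $c>0$ and the displayed equation. This set is semi-algebraic. The graph of $\capa_\Q(-,\tup)|_{\locus}$ is its image under the projection forgetting $\Y$. Indeed, if $\V\in\locus$, there is some $\Y$ with $(\V,\Y)\in\X$ by \eqref{extre-semi-alg}, and any such $\Y$ gives $c=\capa_\Q(\V,\tup)$ by \eqref{eq:cap-extremizers}. Uniqueness of the positive $N$-th root guarantees that the fiber over $\V$ is exactly this single value. By Tarski--Seidenberg, the projection is semi-algebraic, so the function is semi-algebraic. (The Definable Choice Property could alternatively supply a semi-algebraic section $\V\mapsto\Y(\V)$, but projection suffices.) The main point needing care is exactly this step: rationality of $\tup$ is what turns $\det(Y_j)^{p_j}$ into a polynomial relation. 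Independence of the choice of $\Y$ in the fiber is already provided by Theorem \ref{thm:main-thm-cap-fiber}.

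For algebraicity, let $G\subseteq\RR^{D_1+1}$ be the graph, which is semi-algebraic of dimension at most $D_1$, since it is the graph of a function on a subset of $\RR^{D_1}$. Its Zariski closure is therefore a proper algebraic subset of $\RR^{D_1+1}$. Concretely, write $G$ as a finite union of basic sets $\{f_{s}=0,\ g_{s,t}>0\}$. A basic piece with no nontrivial equation is open, and a nonempty open piece is impossible, since it would contain two points with the same $\V$-coordinate. So each nonempty piece has some nonzero equation $f_s$. Then $P:=\prod_s f_s$ is a nonzero polynomial vanishing on $G$, that is, $P(\V,\capa_\Q(\V,\tup))=0$ for all $\V\in\locus$.
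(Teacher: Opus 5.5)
Your proof is correct, but it takes a different and more economical route than the paper in both halves.

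\textbf{Semi-algebraicity.} The paper applies the Definable Choice Property to $\pi_1$ and $\X$ to get a semi-algebraic section $f:\locus\to\RR^{D_2}$. This makes the $M_i$ semi-algebraic functions of $\V$, and the paper asserts without detail that rationality of $\tup$ does the same for the powers $\det(f(\V)_j)^{p_j}$. You instead exhibit the graph directly as the projection of $\Gamma'$ and use only Tarski--Seidenberg. This suffices precisely because Theorem \ref{thm:main-thm-cap-fiber} makes the value independent of the chosen $\Y$ in the fiber. Your clearing of denominators also makes explicit how rationality enters. One small point: the claimed equivalence with $c^N\prod_j\det(Y_j)^{r_j}=(\prod_i\det M_i)^N$, $c>0$, tacitly uses $\prod_i\det M_i>0$. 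This holds since each $M_i$ is positive semidefinite and invertible, and is worth one sentence.

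\textbf{Algebraicity.} The paper appeals to dimension theory: the Zariski closure of the graph has dimension at most $D_1$. You observe instead that every nonempty basic piece of the graph must carry a nonzero equation, since a nonempty open subset of $\RR^{D_1+1}$ cannot sit inside a graph. This is elementary and produces $P$ explicitly as a product.

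\textbf{Comparison.} The paper's route buys a semi-algebraic choice of extremizers $\V\mapsto f(\V)$, and it parallels the later use of Definable Choice in the proof of Theorem \ref{thm:main}. Yours uses strictly less machinery, since Definable Choice is itself proved via cell decomposition. The same projection idea would also handle that later step, because the graph over $\s$ is the projection of $\{(\V,\V',c) : (\V,\V')\in\mathcal{Y},\ (\V',c)\in G\}$.
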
 

\begin{proof}
The Definable Choice Property \cite[Theorem 3.1]{Coste-1999}, applied to $\pi_1$ and the semi-algebraic family $\X$, yields a semi-algebraic function
\begin{equation*}
f:\locus \to \RR^{D_2},
\end{equation*}
such that $(\V, f(\V)) \in \X$ for all $V \in \locus$.

Next, taking $(Y_j)=f(\V)$ in $(\ref{eqn:eqn1-defn-locus})$ makes each $M_i$ a semi-algebraic function on $\locus$. Furthermore, since $\tup$ is assumed to be rational, the denominator in $(\ref{eq:cap-extremizers})$ is a semi-algebraic function, as well. Therefore, $\capa_\Q(-, \tup):\locus \to \RR$ is semi-algebraic. 

Finally, the Zariski closure of the graph of $\capa_\Q(-, \tup)$ has the same dimension as $\locus$, which is at most $D_1$, so it is an algebraic subset that is strictly contained in $\RR^{D_1+1}$. Thus there exists a non-zero polynomial $P$ in $D_1+1$ variables such that
$$
P(\V, \capa_\Q(\V, \tup))=0 \qquad \text{for all } \V \in \locus.
$$
\end{proof}

\begin{rmk}\label{non-rationality-rmk} Without the rationality assumption on $\tup$, the denominator in $(\ref{eq:cap-extremizers})$ is not semi-algebraic. In fact, that is the only place in the proof above where the rationality of $\tup$ is used. 
\end{rmk}

\begin{example}\label{non-algebraic-example} We are grateful to Neal Bez, Jon Bennett, and Shohei Nakamura for providing the following example. It shows that, if the exponents $p_1, \ldots, p_m$ are not rational, the capacity/BL constant is not semi-algebraic.

Choose $k=1$, $m=2$, and $d_1=n_1=n_2=1$. Fix $\tup=(p_1, p_2) \in \RR^2_{>0}$ such that $p_1+p_2=1$. Let $\V=(\V_1:\RR \to \RR, \V_2:\RR\to \RR)$ be a datum with
\[
\V_1(x)=ax,\qquad \V_2(x)=bx \qquad (x\in \mathbb R),
\]
where $(a,b)\in \RR^2$. Using the weighted AM-GM inequality, it is immediate to see that
\[
\capa_\Q(\V, \tup)=a^{2p_1}b^{2p_2},
\]
and thus $\bl(\V, \tup)=|a|^{-p_1}|b|^{-p_2}$. So the set of all feasible data is 
\[
\{(a,b)\in \mathbb R^2 \mid a\neq 0,\ b\neq 0\}.
\]
Furthermore, for any $(a,b)$ in the set of feasible data, the corresponding datum $\V$ is extremizable with extremizers given by $Y_1=b^2$ and $Y_2=a^2$.

But the function $(a,b) \to a^{2p_1}b^{2p_2}$, with $a\neq 0$ and $b \neq 0$, can not be semi-algebraic if the exponents are irrational numbers. Indeed, choose $p=p_1$ to be irrational and assume for a contradiction that it is semi-algebraic. In particular, the function $t \to t^{2p}$ is semi-algebraic. Thus there exists a non-zero polynomial $P \in \RR[x,y]$ such $P(t, t^{2p})=0$ for all $t\neq 0$. Writing $P(x,y)=\sum_{i,j \in \mathbb{Z}_{\geq 0}}c_{ij}x^i y^j$,  we get $P(t, t^{2p})=\sum_{i,j} c_{ij} t^{i+2pj}$. Since $p$ is irrational, the exponents $i+2pj$ are all distinct, and therefore the coefficients $c_{ij}$ must vanish (contradiction).
\end{example}

\subsection{The capacity as a function on the entire set of feasible data}
In this section, we adopt the quiver invariant approach to the study of BL constants as developed in \cite{ChiDer-2021} (see also \cite{ChiDer-2023}). 

For our general bipartite quiver $\Q$, we denote the set of vertices by $\Q_0:=\{\underline{1}, \ldots, \underline{k}\} \cup \{1, \ldots, m\}$ and the set of arrows by $\Q_1:=\cup_{(i,j) \in [k] \times [m]} \ar_{ij}$. For an arrow $a$, we write $ta \in \{\underline{1}, \ldots, \underline{k}\}$ for its \emph{tail} and $ha \in  \{1, \ldots, m\}$ for its \emph{head}. We also write $\Q^+_0$ for the set of source vertices $\{\underline{1}, \ldots, \underline{k}\}$ and $\Q^-_0$ for the set of sink vertices $\{1, \ldots, m\}$.

A \emph{representation} of $\Q$ is a family $W=(W_x,W_a)_{x \in \Q_0, a\in \Q_1}$, where $W_x$ is finite dimensional $\RR$-vector spaces for every $x \in \Q_0$, and $W_a: W_{ta} \to W_{ha}$ is an $\RR$-linear map for every $a \in \Q_1$.  After fixing bases for the vector spaces $W_x$, $x \in \Q_0$, we often view the linear maps $W_a$, $a \in \Q_1$, as matrices of appropriate size. The dimension vector of $W$ is $\ddim(W) \in \ZZ^{\Q_0}_{\geq 0}$ with $\ddim(W)_x:=\dim_\RR W_x$ for all $x \in \Q_0$.

Let $W$ be a representation of $\Q$. A subrepresentation of $W$ is a representation $W'$ such that $W'_x$ is a subspace of $W_x$ for every $x \in \Q_0$, $W_a(W'_{ta}) \subseteq W'_{ha}$, and $W'_a=\restr{W_a}{W'_{ta}}$ for every $a \in \Q_1$. The \emph{direct sum} of two representations is defined by taking direct sums at the level of vector spaces and linear maps.  The abelian category of all finite dimensional representations of $\Q$ is denoted by $\rep(\Q)$.

By a \emph{dimension vector} $\ff$ of $\Q$, we mean an assignment of non-negative integers to the vertices of $\Q$, \emph{i.e.}, $\ff \in \ZZ^{Q_0}_{\geq 0}$. For a tuple $\cc \in \RR^{m}$, we write $\ff \perp \cc$ to mean that 
$$
\sum_{x \in \Q^+_0} \ff_x=\sum_{y \in \Q^-_0} \ff_y \cc_y.
$$

For a given dimension vector $\ff=(\ff_x)_{x \in \Q_0}$ of $\Q$, the representation space of $\ff$-dimensional real representations of $\Q$ is
$$
\rep(\Q, \ff):=\prod_{a \in \Q_1} \RR^{\ff_{ha}\times \ff_{ta}}.
$$
The change-of-base group $\GL(\ff):=\prod_{x \in \Q_0} \GL(\ff_x)$ acts on $\rep(\Q,\ff)$ by \emph{simultaneous conjugation}: For $W=(W_a)_{a \in \Q_1} \in \rep(\Q,\ff)$ and $A=(A_x)_{x \in \Q_0}\in \GL(\ff)$, $A \cdot W \in \rep(\Q,\ff)$ is defined by
$$
(A \cdot W)_a:=A_{ha} W_a A_{ta}^{-1} \qquad \text{for every } a \in \Q_1.
$$

We record the following straightforward but useful lemma, whose proof we leave to the reader.

\begin{lemma}\label{lemma:quick-lemma} Let $W\in \rep(\Q, \ff)$, $A \in \GL(\ff)$, and $\cc \in \RR^{m}_{>0}$ with $\ff \perp \cc$. 
\begin{enumerate}[(a)]
\item The following formula holds:
\begin{equation}
\capa_Q(A \cdot W, \cc)=
{\prod_{x \in \Q^+_0} \det(A_x)^{-2} \over \prod_{y \in \Q^-_0} \det(A_y)^{-2p_y}} \cdot \capa_\Q(W,\cc).
\end{equation}
Consequently, for $W_1, W_2 \in \rep(\Q, \ff)$, 
\begin{equation}\label{eqn:quick-lemma}
\capa_\Q(A^{-1} \cdot W_1, \cc)=\capa(W_2, \cc) \Longleftrightarrow \capa_\Q(W_1, \cc)=\capa_\Q(A \cdot W_2, \cc).
\end{equation}
\item The property of being extremizable is invariant under the action of $A$: 
\begin{equation}
(W, \cc) \text{ is extremizable } \Longleftrightarrow (A\cdot W, \cc) \text{ is extremizable}
\end{equation}
\end{enumerate}
\end{lemma}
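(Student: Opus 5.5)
The plan is a change of variables in the infimum defining the capacity, exactly as in the proof of Corollary \ref{coro:cap-geom-formula}. I write $B_x:=A_x$ for the source components ($x=\underline{i}$) and $C_y:=A_y$ for the sink components ($y=j$). Then $(A\cdot W)_a = C_{ha} W_a B_{ta}^{-1}$. For $Y=(Y_j)\in\prod_j \pd_{\ff_j}$, the matrix appearing at source $\underline{i}$ for the datum $A\cdot W$ is
\[
\sum_{j} \cc_j \sum_{a\in\ar_{ij}} B_i^{-\top} W_a^{\top} C_j^{\top} Y_j C_j W_a B_i^{-1}
= B_i^{-\top}\Big(\sum_j \cc_j \sum_{a\in \ar_{ij}} W_a^\top \widetilde Y_j W_a\Big) B_i^{-1},
\qquad \widetilde Y_j:=C_j^\top Y_j C_j .
\]
Taking determinants produces the factor $\det(B_i)^{-2}$. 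In the denominator, $\det(Y_j)^{\cc_j}=\det(C_j)^{-2\cc_j}\det(\widetilde Y_j)^{\cc_j}$.

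The map $Y\mapsto \widetilde Y$ is a bijection of $\prod_j \pd_{\ff_j}$ onto itself, with inverse $\widetilde Y_j\mapsto C_j^{-\top}\widetilde Y_j C_j^{-1}$. So the two infima range over the same set, and the objective for $A\cdot W$ at $Y$ equals the constant
\[
\prod_i\det(B_i)^{-2}\Big/\prod_j\det(C_j)^{-2\cc_j}
\]
times the objective for $W$ at $\widetilde Y$. This gives the formula in (a), with $\cc_y$ in place of the $p_y$ written in the statement. I note that the hypothesis $\ff\perp\cc$ is only needed so that $\capa_\Q(\cdot,\cc)$ is defined in the same setting.

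For the equivalence \eqref{eqn:quick-lemma}, let $\kappa(A)$ denote the positive scalar above. It is multiplicative in $A$ and satisfies $\kappa(A^{-1})=\kappa(A)^{-1}$. Applying the formula to $A^{-1}\cdot W_1$ turns the left equality into $\kappa(A)^{-1}\capa_\Q(W_1,\cc)=\capa_\Q(W_2,\cc)$. Applying it to $A\cdot W_2$ turns the right equality into $\capa_\Q(W_1,\cc)=\kappa(A)\capa_\Q(W_2,\cc)$. These two statements are equivalent.

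For (b), the same identity shows that $Y$ attains the infimum for $A\cdot W$ if and only if $\widetilde Y$ attains it for $W$. The identity also shows that the two capacities differ by the positive factor $\kappa(A)$, so feasibility is preserved. Hence extremizability is preserved, with extremizers corresponding under $Y_j\mapsto C_j^\top Y_j C_j$. Applying this with $A^{-1}$ gives the converse direction.

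The argument has no real obstacle. The only point needing care is bookkeeping: the transpose and inverse conventions, and checking that positive definiteness is preserved under congruence by invertible matrices, which is standard.
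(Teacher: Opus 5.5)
Your proof is correct and is the argument the paper intends. The paper leaves this lemma to the reader, but it carries out the same substitution $\widetilde Y_j = h_j^{\top} Y_j h_j$ (a bijection of $\prod_j \pd_{n_j}$) in the proof of Corollary \ref{coro:cap-geom-formula}, and your deductions of the equivalence and of part (b) from the pointwise identity of objectives with the positive factor $\kappa(A)$ are sound, as is your observation that $p_y$ in the displayed formula should read $\cc_y$.
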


\bigskip
Now let $W$ be a representation of $\Q$ and $\cc \in \RR^m_{\geq 0}$. We say that $W$ is \emph{$\cc$-semi-stable} if
\begin{equation}\label{eqn:semi-stable-1}
\sum_{x \in \Q^+_0} \ddim(W)_x=\sum_{y \in \Q^-_0} \ddim(W)_y \cc_y, \textit{ i.e. } \ddim(W) \perp \cc,
\end{equation}
and
\begin{equation}\label{eqn:semi-stable-2}
\sum_{x \in \Q^+_0} \ddim(W')_x \leq \sum_{y \in \Q^-_0} \ddim(W')_y \cc_y,
\end{equation}
for every subrepresentation $W'$ of $W$. We say that $W$ is \emph{$\cc$-stable} if $W\neq 0$, $\ddim(W) \perp \cc$, and $(\ref{eqn:semi-stable-2})$ holds with strict inequality for all proper subrepresentations $0 \neq W' \subsetneq W$.

Let $\rep(\Q)^{ss}_{\cc}$ be the full subcategory of $\rep(\Q)$ whose objects are the $\cc$-semi-stable representations of $\Q$. This is an abelian subcategory of $\rep(\Q)$, closed under extensions. Its simple objects are precisely the $\cc$-stable representations of $\Q$. Furthermore, it is Artinian and Noetherian; hence every $\V \in \rep(\Q)^{ss}_{\cc}$ has a Jordan-H{\"o}lder filtration whose factors are $\cc$-stable.
 
For the remainder of the paper, we fix the following data:
\begin{itemize}
\item a dimension vector $\dd=(d_1, \ldots, d_k, n_1, \ldots, n_m) \in \ZZ^{\Q_0}_{> 0}$;
\item a rational tuple $\tup \in \QQ^m_{>0}$ such that $\dd \perp \tup$. 
\end{itemize}

The rationality assumption on $\tup$ serves two purposes. First, it ensures that the relevant powers of semi-algebraic functions are again semi-algebraic; see Remark \ref{non-rationality-rmk} and Example \ref{non-algebraic-example}. Second, it allows one to bring in methods from quiver invariant theory, such as the Kempf--Ness theorem on closed orbits. In particular, the results from \cite{ChiDer-2021, AJN-2022} that we summarize below rely on such tools.

\begin{theorem}\label{thm:semi-stable-summary} For $\V \in \rep(\Q,\dd)$, the following statements hold.
\begin{enumerate}
\item $(\V, \tup)$ is a feasible BL datum if and only if $\V$ is $\tup$-semi-stable. 
\smallskip
\item $(\V, \tup)$ is extremizable if and only if $\V$ is a direct sum of $\tup$-stable representations. 
\end{enumerate}
\end{theorem}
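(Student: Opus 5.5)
Both parts are summaries of results from \cite{ChiDer-2021} (and, when all $\ar_{ij}$ are singletons, \cite{AJN-2022}); I would deduce them from the theory of semi-invariants of quivers. The first step is to clear denominators. Write $\tup=\frac{1}{N}(\sigma_1,\ldots,\sigma_m)$ with $N,\sigma_j\in\ZZ_{>0}$, and consider the integral weight $\sigma$ of $\GL(\dd)$ given by $\det(A_x)^{N}$ at the sources and $\det(A_y)^{-\sigma_y}$ at the sinks. The relation $\dd\perp\tup$ makes $\sigma$ trivial on the scalar subgroup acting trivially on $\rep(\Q,\dd)$. By King's criterion, $\V$ is $\tup$-semi-stable (resp.\ polystable, i.e.\ a direct sum of $\tup$-stable representations) exactly when $\V$ is $\sigma$-semi-stable in the GIT sense, i.e.\ some semi-invariant of weight a positive multiple of $\sigma$ does not vanish at $\V$ (resp.\ the orbit of $\V$ under the kernel of $\sigma$ is closed among such points). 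The inequality direction in (\ref{eqn:semi-stable-2}) matches the sign convention of $\sigma$ via the Hilbert--Mumford criterion applied to one-parameter subgroups adapted to a subrepresentation $W'$.

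For part (1), I would link the capacity to the norm of a group orbit. Using Lemma \ref{lemma:quick-lemma}(a), $\capa_\Q(\V,\tup)$ equals, up to normalization, the infimum of $\prod_i\det(\cdot)$ over the orbit of $\V$ under $G_\sigma=\ker\sigma$. More precisely, I would use the standard identity (as in \cite{GarGurOliWig-2020}) showing that $\log\capa$ is the infimum over $G_\sigma$ of a Kempf--Ness-type function, with an AM--GM bound of the form $\|A\cdot\V\|^{2}\gtrsim(\text{capacity})$. The capacity is then positive exactly when $0\notin\overline{G_\sigma\cdot\V}$, which happens exactly when $\V$ is $\sigma$-semistable. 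A more hands-on route to the implication ``not semistable $\Rightarrow$ capacity $0$'' is also available. Take a subrepresentation $W'$ violating (\ref{eqn:semi-stable-2}), choose $Y_j$ that scale $W'_j$ by $t$ and its complement by $1$, and let $t\to 0$; the ratio in (\ref{capacity-eqn}) then behaves like $t^{\sum_x \dim W'_x-\sum_y p_y\dim W'_y}$ times a bounded factor, which tends to $0$. For the converse, semi-stability gives a nonvanishing semi-invariant $f$ of weight $N'\sigma$, and $|f(A\cdot\V)|=|\sigma(A)|^{N'}|f(\V)|$ with $|f(W)|\le C\|W\|^{\deg f}$ bounds the capacity below.

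For part (2), I would first use Corollary \ref{coro:extremizable-geometric}: extremizable means that some $A\cdot\V$ is geometric. The geometric equations (\ref{eq:geom1})--(\ref{eq:geom2}) are exactly the vanishing of the moment map for the maximal compact subgroup of $G_\sigma$ (shifted by $\sigma$). The Kempf--Ness theorem then says that the orbit meets this zero set if and only if the $G_\sigma$-orbit of $\V$ is closed in the semistable locus, i.e.\ $\V$ is $\sigma$-polystable. King's theorem identifies the polystable points with direct sums of $\tup$-stable representations, namely the associated graded objects of Jordan--H\"older filtrations in the abelian category $\rep(\Q)^{ss}_\tup$.

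The main obstacle is the correct normalization matching the moment-map zero set with (\ref{eq:geom1})--(\ref{eq:geom2}) when $\tup$ is merely rational. I would handle it by passing to the integer weight $N\tup$, replacing each sink $j$ by $\sigma_j$ copies, so that the exponents become $1$ and the usual real Kempf--Ness theorem applies. Real versus complex orbit closedness is handled by the Birkes--Richardson real GIT.
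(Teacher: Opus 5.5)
The paper gives no proof of this statement. It imports both parts from \cite{ChiDer-2021} (and \cite{AJN-2022}), remarking only that they rest on Kempf--Ness-type invariant theory. Your outline is essentially the argument behind those citations: King's criterion via one-parameter subgroups, a nonvanishing semi-invariant to bound the capacity from below, and Kempf--Ness on the level set cut out by (\ref{eq:geom1})--(\ref{eq:geom2}) via Corollary \ref{coro:extremizable-geometric}, with real GIT handling real orbits. Your hands-on direction of (1) is correct: take $Y_j$ equal to $t$ on $W'_j$ and $1$ on its orthogonal complement. Then Fischer's inequality gives $\det M_i\le C\,t^{\dim W'_{\underline{i}}}$, while the denominator is exactly $t^{\sum_y p_y\dim W'_y}$.

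\textbf{The step that fails as written is the sign of your character.} With $\sigma(A)=\prod_{x\in\Q_0^+}\det(A_x)^{N}\prod_{y\in\Q_0^-}\det(A_y)^{-\sigma_y}$ and $f(A\cdot\V)=\sigma(A)^{N'}f(\V)$, take a subrepresentation $W'$ with $\ddim W'=r$, a basis adapted to it, and $\lambda(t)_x=\diag(t\,\Id_{r_x},\Id)$. Then $\lim_{t\to 0}\lambda(t)\cdot\V$ exists and
$f(\lambda(t)\cdot\V)=t^{N'(N\sum_{x\in\Q_0^+}r_x-\sum_{y\in\Q_0^-}\sigma_y r_y)}f(\V)$.
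So $f(\V)\neq 0$ forces $\sum_{x\in\Q_0^+}r_x\ge\sum_y p_y r_y$, which is the reverse of (\ref{eqn:semi-stable-2}). The same problem hits your lower bound. Run it with $A=(M_i^{1/2},Y_j^{1/2})$, so that $\sum_a p_{ha}\|(A\cdot\V)_a\|_F^2=\sum_i d_i$ and hence $|f(A\cdot\V)|\le C$. With your weight this yields an \emph{upper} bound on the ratio in (\ref{capacity-eqn}), not a lower one.

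The fix is to use $\sigma^{-1}$, i.e.\ semi-invariants with $f(A\cdot\V)=\prod_x\det(A_x)^{-NN'}\prod_y\det(A_y)^{N'\sigma_y}f(\V)$. This is the weight of the determinantal semi-invariants $\det\big(\sum_{a\in\ar_{ij}}\V_a\otimes T_a\big)_{j,i}$ with $T_a\in\RR^{\sigma_j\times N}$. With it the bound becomes $\prod_i\det M_i/\prod_j\det(Y_j)^{p_j}\ge(|f(\V)|/C)^{2/(NN')}>0$.

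Two smaller points:
\begin{itemize}
\item King's criterion is stated over $\mathbb{C}$, so you need a real nonvanishing semi-invariant. The maximal destabilizing subrepresentation of $\V\otimes_{\RR}\mathbb{C}$ is unique, hence conjugation-stable. So real semistability implies complex semistability, and the real or imaginary part of a complex semi-invariant that does not vanish at $\V$ then serves.
\item Replicating sinks is unnecessary, and it would force you to check that polystability transfers to the new quiver. The rescaling $U_a=\sqrt{p_{ha}}\,\V_a$ commutes with the $\GL(\dd)$-action. It turns (\ref{eq:geom1})--(\ref{eq:geom2}) into $\sum_{ta=\underline{i}}U_a^{\top}U_a=\Id_{d_i}$ and $\sum_{ha=j}U_aU_a^{\top}=p_j\Id_{n_j}$. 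That is directly the moment-map level set for the rational weight $(-1,\dots,-1,p_1,\dots,p_m)$, a positive multiple of an integral one, so King's version of Kempf--Ness applies as is.
\end{itemize}
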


In what follows, for a representation $W \in \rep(\Q, \dd)$ and a tuple of dimension vectors $(\dd^s, \ldots, \dd^1)$ with $\sum_{\ell=1}^s \dd^\ell=\dd$, we say that $W$ is \emph{upper-triangular of type $(\dd^s, \ldots, \dd^1)$} if for each arrow $a \in \Q_1$, the matrix $W_a$ has a block upper triangular structure with the block diagonal entries having sizes $\dd^\ell_{ha}\times \dd^\ell_{ta}$, $\ell \in [s]$. For such a representation, set $\diag(W)_a$ to be the direct sum of the block diagonal entries of $W_a$ for every $a \in \Q_1$, and define $\diag(W):=(\diag(W)_a)_{a \in \Q_1} \in \rep(\Q, \dd)$.

Our next result plays an essential role in the proof of Theorem \ref{thm:partition-semi-alg}. The latter is one of the main ingredients in the proof of Theorem \ref{thm:main}.  

\begin{prop}\label{prop:main} Let $(\V, \tup) \in \s$ be a feasible datum with $\V \in \rep(\Q, \dd)$. Then there exist $A \in \GL(\dd)$ and dimension vectors $\dd^1, \ldots, \dd^s$ such that
\begin{enumerate}
\item $\dd^1+\ldots+\dd^s=\dd$ and $\dd^\ell \perp \tup$ for every $\ell \in [s]$;
\smallskip
\item $A \cdot \V$ is upper-triangular of type $(\dd^s, \ldots, \dd^1)$;
\smallskip
\item $(\widetilde{\V}, \tup)$ is extremizable where $\widetilde{V}:=\diag(A \cdot \V)$.
\end{enumerate}
\end{prop}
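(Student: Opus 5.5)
By Theorem \ref{thm:semi-stable-summary}(1), feasibility of $(\V,\tup)$ means that $\V$, viewed as a representation of $\Q$ of dimension vector $\dd$, is $\tup$-semi-stable. The plan is to take a Jordan--H\"older filtration of $\V$ inside the abelian category $\rep(\Q)^{ss}_{\tup}$, choose an adapted basis in which $\V$ becomes block upper-triangular, and then apply Theorem \ref{thm:semi-stable-summary}(2) to the associated graded representation.

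\textbf{Step 1 (filtration).} Since $\rep(\Q)^{ss}_{\tup}$ is Artinian and Noetherian and its simple objects are the $\tup$-stable representations, there is a chain of subrepresentations
\[
0=\V^0\subsetneq \V^1\subsetneq \cdots \subsetneq \V^s=\V
\]
in which every $\V^\ell$ is $\tup$-semi-stable and every quotient $\V^\ell/\V^{\ell-1}$ is $\tup$-stable. Set $\dd^\ell:=\ddim(\V^\ell/\V^{\ell-1})$. Dimension is additive along the filtration, so $\sum_\ell \dd^\ell=\dd$. Each stable factor satisfies $\dd^\ell\perp\tup$ by the definition of stability. This gives condition (1).

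\textbf{Step 2 (adapted bases).} At each vertex $x\in\Q_0$, choose a basis of $\V_x$ compatible with the flag $\V^1_x\subseteq\cdots\subseteq\V^s_x$. Concretely, first take a basis of $\V^1_x$, then extend it to a basis of $\V^2_x$, and so on. Let $A_x\in\GL(\dd_x)$ be the change-of-basis matrix to this basis, ordered so that the block sizes match the convention of type $(\dd^s,\ldots,\dd^1)$. I need to check this ordering against the paper's convention and reverse it if necessary.

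Each $\V_a$ maps $\V^\ell_{ta}$ into $\V^\ell_{ha}$ for all $\ell$. Therefore $(A\cdot\V)_a$ is block upper-triangular, which gives condition (2). Its $\ell$-th diagonal block is precisely the matrix of the induced map $(\V^\ell/\V^{\ell-1})_a$ in the induced bases. Hence $\diag(A\cdot \V)$ is isomorphic to $\bigoplus_\ell \V^\ell/\V^{\ell-1}$.

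\textbf{Step 3 (extremizability).} The representation $\widetilde{\V}=\diag(A\cdot\V)$ has dimension vector $\dd$ and is a direct sum of $\tup$-stable representations. By Theorem \ref{thm:semi-stable-summary}(2), $(\widetilde\V,\tup)$ is extremizable, which gives condition (3).

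The main point requiring care is bookkeeping rather than anything deep. First, the filtration must be taken in $\rep(\Q)^{ss}_\tup$, so that the quotients are stable and not merely semi-stable. This rests on the stated facts that this category is abelian and closed under extensions, with simples equal to the stable objects. Second, the block-ordering convention has to match the definition of ``upper-triangular of type $(\dd^s,\ldots,\dd^1)$''. If the convention were reversed, I would instead build the filtration from quotients, or reverse the basis order, which turns lower-triangular into upper-triangular.
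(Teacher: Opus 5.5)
Your proposal is correct and follows the paper's proof essentially step for step: a Jordan--H\"older filtration in $\rep(\Q)^{ss}_{\tup}$, a basis adapted to that filtration giving the block upper-triangular form, and Theorem \ref{thm:semi-stable-summary}(2) applied to the resulting direct sum of $\tup$-stable factors. Your ordering concern is resolved by relabeling your $\dd^\ell$ as $\dd^{s+1-\ell}$; no other change is needed. With your basis built from $\V^1$ upward, the top-left block is the smallest nonzero subrepresentation, and in the paper's convention the top-left block likewise has size $\dd^s=\ddim(\V_{s-1})$, the smallest nonzero piece.
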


\begin{proof} Since $(\V,\tup)$ is a feasible datum, $\V$ is $\tup$-semi-stable by Theorem \ref{thm:semi-stable-summary}{(1)}. Now let us consider the Jordan--H\"older filtration of $\V$ in the category of all
$\tup$-semi-stable representations:
\[
0=\V_s\subset \V_{s-1}\subset \cdots \subset \V_1 \subset \V_0=\V
\]
where $\V_{\ell-1}/\V_\ell$ is $\tup$-stable for every $\ell\in [s]$. Set $\dd^\ell:=\ddim(\V_{\ell-1}/\V_\ell)$ for every $\ell \in [s]$.

After choosing a basis for each $\V_x=\RR^{d_x}$ compatible with this filtration, we obtain a transformation
$A \in \GL(\dd)$ such that, for every arrow $a \in \Q_1$,
\[
(A\cdot \V)_a =
\begin{pmatrix}
X_a^s & *      & \cdots & * \\
0     & X_a^2  & \cdots & * \\
\vdots& \vdots & \ddots & \vdots \\
0     & 0      & \cdots & X_a^1
\end{pmatrix},
\]
where $X^\ell := (X_a^\ell)_{a\in \Q_1} \in \rep(\Q,\dd^\ell)$ is isomorphic to the $\tup$-stable factor $\V_{\ell-1}/\V_\ell$ for every $\ell \in [s]$. This shows that $A\cdot \V$ is block upper-triangular
of type $(\dd^s,\dots,\dd^1)$; furthermore, since $\diag(A\cdot \V)$ is a direct sum of $\tup$-stable
representations, $(\widetilde{\V}, \tup)$ is extremizable by Theorem \ref{thm:semi-stable-summary}{(2)}.
\end{proof}

Next, we use Proposition \ref{prop:main} in an essential way to show that the computation of $\capa_\Q(\V, \tup)$ reduces to evaluating the capacity at an extremizable datum lying in the fiber over $\V$ of a semi-algebraic family.

In what follows, for the dimension vector $\dd=(d_1, \ldots, d_k, n_1, \ldots, n_m) \in \ZZ^{\Q_0}_{> 0}$, we have $\rep(\Q, \dd)=\RR^{D_1}$ where $D_1=\sum_{i,j} |\ar_{ij}| d_i n_j$. Also, recall that
$$
\s=\left \{ \V \in \RR^{D_1} \;\middle|\; (\V, \tup) \text{ is feasible} \right\}.
$$

\begin{theorem}\label{thm:partition-semi-alg} There exists a semi-algebraic family $\mathcal{Y} \subseteq \RR^{D_1}\times \RR^{D_1}$ such that 
\begin{enumerate}
\item $\s =\pi_1(\mathcal{Y})$, where $\pi_1:\RR^{D_1}\times \RR^{D_1} \to \RR^{D_1}$ is the first projection; 

\item for every $(\V, \V') \in \mathcal{Y}$,  $(\V', \tup)$ is extremizable and
\begin{equation}\label{eq:capa-reduction-extremizable}
\capa_\Q(\V, \tup)=\capa_\Q(\V', \tup).
\end{equation}
\end{enumerate}
\end{theorem}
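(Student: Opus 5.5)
Define $\mathcal{Y}$ as the union, over the finitely many ordered tuples $(\dd^s,\ldots,\dd^1)$ of nonzero dimension vectors with $\sum_\ell \dd^\ell=\dd$ and $\dd^\ell\perp\tup$, of the sets
\[
\mathcal{Y}_{(\dd^s,\ldots,\dd^1)}:=\left\{(\V,\V')\;\middle|\; \exists A\in\GL(\dd):\ A\cdot\V \text{ is upper-triangular of type } (\dd^s,\ldots,\dd^1),\ \V'=A^{-1}\cdot\diag(A\cdot\V),\ \V'\in\locus\right\}.
\]
Each piece is semi-algebraic. The conditions on $(\V,\V',A)$ are polynomial equalities together with $\det A_x\neq 0$ and membership in $\locus=\pi_1(\X)$, which is semi-algebraic by Theorem \ref{thm:main-thm-cap-fiber}. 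Projecting away $A$ preserves semi-algebraicity by Tarski--Seidenberg. Part (1) then has two inclusions. The inclusion $\s\subseteq\pi_1(\mathcal{Y})$ follows from Proposition \ref{prop:main}: it supplies $A$ and a type with $\diag(A\cdot\V)$ extremizable, and Lemma \ref{lemma:quick-lemma}(b) shows $A^{-1}\cdot\diag(A\cdot\V)$ is extremizable as well. For the reverse inclusion, an extremizable $\V'$ is feasible, so once (2) is proved we get $\capa_\Q(\V,\tup)=\capa_\Q(\V',\tup)>0$.

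The key step for (2) is the following degeneration identity. If $W$ is upper-triangular of any type $(\dd^s,\ldots,\dd^1)$ with all $\dd^\ell\perp\tup$, then $\capa_\Q(W,\tup)=\capa_\Q(\diag(W),\tup)$. Granting it, for $(\V,\V')\in\mathcal{Y}$ put $W=A\cdot\V$. Then $\capa_\Q(W,\tup)=\capa_\Q(\diag W,\tup)=\capa_\Q(A\cdot\V',\tup)$, and equation \eqref{eqn:quick-lemma} gives $\capa_\Q(\V,\tup)=\capa_\Q(\V',\tup)$.

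To prove the identity, I would use the one-parameter torus $T_t\in\GL(\dd)$ that acts on the $\ell$-th block at every vertex by the scalar $t^{\ell}$, ordered so that conjugation scales the strictly upper blocks by positive powers of $t$. Then $T_t\cdot W\to\diag(W)$ as $t\to 0$. The rescaling factor in Lemma \ref{lemma:quick-lemma}(a) is $\prod_\ell t^{-2\ell(\sum_{x}\dd^\ell_x-\sum_y p_y\dd^\ell_y)}=1$, because each $\dd^\ell\perp\tup$. Hence $\capa_\Q(T_t\cdot W,\tup)=\capa_\Q(W,\tup)$ for all $t>0$.

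The capacity is an infimum of continuous functions, so it is upper semicontinuous. Letting $t\to0$ gives $\capa_\Q(\diag W,\tup)\ge\capa_\Q(W,\tup)$. For the reverse inequality I expect the main obstacle. Upper semicontinuity only gives one direction, and I would not want to rely on continuity of the capacity near infeasible points. Instead I would argue directly: for fixed $Y_j$, write each $Y_j$ in block form and compare with its block-diagonal part. A Fischer/Schur-complement type determinant inequality should show that for block upper-triangular $W$, the objective at $Y$ dominates the corresponding objective for $\diag W$ evaluated at suitable Schur-complement blocks. This gives $\capa_\Q(W,\tup)\ge\capa_\Q(\diag W,\tup)$.

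Failing that, I would invoke the continuity of the capacity on feasible data (\cite[Theorem 3.6]{BezGauTsu-2026}). Here $\diag W$ is feasible, since a direct sum of stable representations is semi-stable, and so $T_t\cdot W$ is feasible for small $t$ because it is in the same orbit as $W$. Continuity at $\diag W$ then yields $\capa_\Q(W,\tup)=\lim_{t\to0}\capa_\Q(T_t\cdot W,\tup)=\capa_\Q(\diag W,\tup)$, which closes the argument.
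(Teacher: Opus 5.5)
Your proposal is correct. The construction of $\mathcal{Y}$, the torus $T_t$ with trivial rescaling factor coming from $\dd^\ell\perp\tup$, and the use of Proposition~\ref{prop:main} and Lemma~\ref{lemma:quick-lemma} are the paper's. You are more careful than the paper in two places: you check that $A^{-1}\cdot\diag(A\cdot\V)$ is extremizable, and you actually derive $\pi_1(\mathcal{Y})\subseteq\s$ from (2), which the paper leaves implicit.

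\textbf{Where you differ.} The difference is in passing to the limit $t\to 0$. The paper invokes continuity of $\capa_\Q(-,\tup)$, an external theorem. You get $\capa_\Q(\diag W,\tup)\ge\capa_\Q(W,\tup)$ from upper semicontinuity, and propose a determinant inequality for the reverse direction.

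\textbf{The inequality you anticipate does hold.} The right comparison point is not the block-diagonal part of $Y_j$. Instead, factor $Y_j=L_j^{\top}D_jL_j$, where $L_j$ is block unitriangular of the same shape as the $W_a$ and $D_j=\diag(D_j^s,\dots,D_j^1)$. For two blocks this is $D_j=\diag\bigl(Y_{j,11},\,Y_{j,22}-Y_{j,21}Y_{j,11}^{-1}Y_{j,12}\bigr)$.
\begin{itemize}
\item Then $W_a^{\top}Y_jW_a=(L_jW_a)^{\top}D_j(L_jW_a)$, and $L_jW_a$ is block upper triangular with diagonal blocks $X_a^\ell$.
\item Peel off the top block by a Schur complement (minimize over the top coordinates) and discard the nonnegative top term. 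By induction this gives
\[
\det M_i\;\ge\;\prod_{\ell}\det\Bigl(\sum_{j} p_j\sum_{a\in\ar_{ij}}(X_a^\ell)^{\top}D_j^\ell X_a^\ell\Bigr).
\]
\item Meanwhile $\det Y_j=\prod_\ell\det D_j^\ell$.
\end{itemize}
So the objective for $W$ at $Y$ dominates the objective for $\diag W$ at $D$, and $\capa_\Q(W,\tup)\ge\capa_\Q(\diag W,\tup)$ follows.

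\textbf{What each route buys.} Your route gives $\capa_\Q(W,\tup)=\capa_\Q(\diag W,\tup)$ for every upper-triangular $W$ of a type with all $\dd^\ell\perp\tup$. It needs no feasibility hypothesis and no continuity theorem, so part (2) and the inclusion $\pi_1(\mathcal{Y})\subseteq\s$ become entirely elementary. The paper's route is shorter but rests on the cited continuity result.

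\textbf{Fix in your fallback.} Your fallback is essentially the paper's argument, but its justification is off. The sentence ``$T_t\cdot W$ is feasible because it lies in the orbit of $W$'' presupposes that $W$ is feasible. That is circular, since you later use (2) to prove $\pi_1(\mathcal{Y})\subseteq\s$. The correct reason is that the feasible ($=\tup$-semi-stable) locus is open and $T_t\cdot W\to\diag W$, which is feasible.
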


\begin{proof} For every tuple of dimension vectors $(\dd^s, \ldots, \dd^1)$ satisfying Proposition \ref{prop:main}{(1)}, consider the subset of $\RR^{D_1}\times \RR^{D_1} \times \GL(\dd)$ consisting of all triples $(\V, \V', A)$ such that:
\begin{enumerate}[(i)]
\item $(\V', \tup)$ is extremizable; 
\item $A\cdot \V$ is upper triangular of type $(\dd^s, \ldots, \dd^1)$; 
\item $A\cdot \V'=\diag(A \cdot \V)$. 
\end{enumerate}
This is a semi-algebraic set since (i) is a semi-algebraic condition by Theorem \ref{thm:main-thm-cap-fiber}, and (ii) and (iii) are clearly algebraic conditions. Therefore, the union of all such subsets, denoted by $\mathcal{Z}$, is also a semi-algebraic subset of $\RR^{D_1}\times \RR^{D_1} \times \GL(\dd)$. 

Now let $\mathcal{Y}$ denote the projection of $\mathcal{Z}$ onto $\RR^{D_1}\times \RR^{D_1}$. Then $\mathcal{Y}$ is a semi-algebraic set. Furthermore,  for any $\V \in \s$, we have that $\V=\pi_1(\V, \V')$, where $\V':=A^{-1} \cdot \diag(A \cdot \V)$ and $A \in \GL(\dd)$ is as in Proposition \ref{prop:main}. Thus
$$
\s=\pi_1(\mathcal{Y}). 
$$

It remains to show that $(\ref{eq:capa-reduction-extremizable})$ holds.  Let $(\V, \V') \in \mathcal{Y}$ and $A \in \GL(\dd)$ be such that $(\V, \V', A) \in \mathcal{Z}$. For every arrow $a \in \Q_1$, write
\[
(A\cdot \V)_a =
\begin{pmatrix}
X_a^s & *      & \cdots & * \\
0     & X_a^2  & \cdots & * \\
\vdots& \vdots & \ddots & \vdots \\
0     & 0      & \cdots & X_a^1
\end{pmatrix},
\]
where $X^\ell := (X_a^\ell)_{a\in \Q_1} \in \rep(\Q,\dd^\ell)$ for every $\ell \in [s]$. Next, we are going to degenerate $A\cdot \V$ to $\diag(A\cdot \V)$ via a convenient one-parameter subgroup. Concretely, let
\[
\lambda:\RR^{\ast}\to \GL(\dd)
\]
be the one-parameter subgroup defined by
\[
\lambda(t)_x =
\begin{pmatrix}
t^{s-1} \Id_{d_x^{s}} & 0 & \cdots & 0 \\
0 & t^{s-2} \Id_{d_x^{s-1}} & \cdots & 0 \\
\vdots & \vdots & \ddots & \vdots \\
0 & 0 & \cdots & \Id_{d_x^1}
\end{pmatrix},
\qquad x\in \Q_0.
\]

Then $(\lambda(t)\cdot (A\cdot \V))_a$ is block upper-triangular, whose $(i,j)$-block
entry is the $(i,j)$-block entry of $(A\cdot \V)_a$ multiplied by $t^{\,j-i}$.
Therefore,
\begin{equation} \label{eq:limit-1-psg}
\lim_{t\to 0}\lambda(t)\cdot (A\cdot \V)
=
\operatorname{diag}(A\cdot \V).
\end{equation}

Next, we claim that
\begin{equation}\label{eqn:capa-1-psg}
\capa_\Q(\lambda(t)\cdot (A\cdot \V),\tup)
=
\capa_\Q(A\cdot \V,\tup),
\qquad \forall t\in \RR^\ast.
\end{equation}
Indeed,  using the fact that $\dd_i \perp \tup$ for every $i \in [s]$, it is immediate to see that
\[
\prod_{x\in \Q_0^+}\det(\lambda(t)_x)^2
=
\prod_{y\in \Q_0^-}\det(\lambda(t)_y)^{2p_y},
\qquad \forall t\in \RR^\ast,
\]
which, combined with Lemma \ref{lemma:quick-lemma}, yields $(\ref{eqn:capa-1-psg})$.

Now, the capacity $\capa_\Q(-,\tup)$ is known to be continuous (see for example \cite{BezGauTsu-2025}) and, using $(\ref{eq:limit-1-psg})$ and $(\ref{eqn:capa-1-psg})$, we obtain
\[
\capa_\Q(\operatorname{diag}(A\cdot \V),\tup)
=
\lim_{t\to 0}\capa_\Q(\lambda(t)\cdot (A\cdot \V),\tup)
=
\capa_\Q(A\cdot \V,\tup).
\]
Applying Lemma \ref{lemma:quick-lemma} once more, we get that
\[
\capa_\Q(\V,\tup)
=
\capa_\Q(A^{-1}\cdot \operatorname{diag}(A\cdot \V),\tup)=\capa_\Q(\V', \tup),
\]
which finishes the proof.
\end{proof}

Although not needed for our main results, the following corollary recovers \cite[Theorem 2.3]{BezGauTsu-2025} when $\tup$ is rational. Their proof builds on operator scalings \cite{GarGurOliWig-2017}. Our proof is based on Theorem \ref{thm:semi-stable-summary} and the simple fact that any $\tup$-semi-stable representation admits a Jordan--H\"older filtration with $\tup$-stable factors.

\begin{corollary} For any feasible datum $(\V, \tup)$ with $\V \in \rep(\Q, \dd)$, there exists a geometric datum $(W, \tup)$ and a $1$-parameter subgroup $\widetilde{\lambda}:\RR^\ast \to \GL(\dd)$ such that
$$
\lim_{t\to 0}\widetilde{\lambda}(t)\cdot \V
= W.
$$
In particular, for every $\varepsilon>0$, there exists $U \in \GL(\dd)\V$ such that
$$
||U-W||_F<\varepsilon.
$$
\end{corollary}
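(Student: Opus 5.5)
The plan is to reuse the degeneration from the proof of Theorem~\ref{thm:partition-semi-alg}, after first normalizing the diagonal blocks so that the limit is geometric rather than merely extremizable. The $1$-parameter subgroup will be $\lambda$ from that proof composed with a fixed element $C\in\GL(\dd)$ in the orbit direction. That is, $\widetilde{\lambda}(t):=\lambda(t)\,C$, and $\widetilde{\lambda}(t)\cdot\V=\lambda(t)\cdot(C\cdot\V)$. I read ``$1$-parameter subgroup'' in this sense because of the following necessary condition. By continuity of $\capa_\Q(-,\tup)$ and Lemma~\ref{lemma:quick-lemma}, a genuine homomorphism $\mu$ satisfying the balancing identity of (\ref{eqn:capa-1-psg}) would force $\capa_\Q(\lim_{t\to 0}\mu(t)\cdot\V,\tup)=\capa_\Q(\V,\tup)$. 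A geometric limit has capacity $1$ by Theorem~\ref{thm:geom-cap-one}, so this can only happen when $\capa_\Q(\V,\tup)=1$. The fixed factor $C$ is what absorbs the normalizing constant $\capa_\Q(\V,\tup)$.

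\textbf{Step 1.} Apply Proposition~\ref{prop:main} to get $A\in\GL(\dd)$ and $\dd^1,\dots,\dd^s$ with $\dd^\ell\perp\tup$, such that $A\cdot\V$ is upper-triangular of type $(\dd^s,\dots,\dd^1)$ with diagonal blocks $X^\ell$ that are $\tup$-stable. Each $X^\ell$ is by itself a direct sum of (one) $\tup$-stable representation. By Theorem~\ref{thm:semi-stable-summary}(2), $(X^\ell,\tup)$ is extremizable, and Corollary~\ref{coro:extremizable-geometric} then gives $B^\ell\in\GL(\dd^\ell)$ with $(B^\ell\cdot X^\ell,\tup)$ geometric.

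\textbf{Step 2.} Let $B:=\bigoplus_\ell B^\ell$, which is block diagonal with respect to the decomposition defining $\lambda$, and set $C:=BA$. Left and right multiplication by block-diagonal matrices preserves block upper-triangularity. Hence $C\cdot\V$ is upper-triangular of the same type and $\diag(C\cdot\V)=\bigoplus_\ell B^\ell\cdot X^\ell=:W$. The identities (\ref{eq:geom1}) and (\ref{eq:geom2}) decouple block by block for a block-diagonal representation:
\begin{equation*}
\sum_j p_j\sum_{a\in\ar_{ij}}W_a^\top W_a=\bigoplus_\ell\Big(\sum_j p_j\sum_{a\in\ar_{ij}}(B^\ell\cdot X^\ell)_a^\top(B^\ell\cdot X^\ell)_a\Big)=\bigoplus_\ell \Id_{d^\ell_i}=\Id_{d_i},
\end{equation*}
and similarly at the sinks. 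So $(W,\tup)$ is geometric, and the tuple $\dd^\ell$ still satisfies $\dd^\ell\perp\tup$ as required.

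\textbf{Step 3.} Compute the limit exactly as in (\ref{eq:limit-1-psg}). The $(i,j)$ block of $(\lambda(t)\cdot(C\cdot\V))_a$ is the corresponding block of $(C\cdot\V)_a$ multiplied by $t^{j-i}$, so $\widetilde{\lambda}(t)\cdot\V=\lambda(t)\cdot(C\cdot\V)\to\diag(C\cdot\V)=W$ as $t\to0$. The ``in particular'' statement is then immediate. Given $\varepsilon>0$, pick $t$ small with $\|\lambda(t)C\cdot\V-W\|_F<\varepsilon$ and take $U:=(\lambda(t)C)\cdot\V\in\GL(\dd)\V$.

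The main obstacle is conceptual rather than computational. One must make sure that the normalization $B$ commutes with $\lambda(t)$, so that it does not destroy the triangular structure or the limit. This is exactly why $B$ is chosen block diagonal, block by block from the stable Jordan--H\"older factors, rather than as a global geometrizing transformation of $\diag(A\cdot\V)$. As noted above, if one insists on a homomorphism acting on $\V$ itself, the statement requires $\capa_\Q(\V,\tup)=1$. In that case one would instead try $\widetilde{\lambda}(t)=C^{-1}\lambda(t)C$ together with a choice of $C$ for which $C^{-1}\cdot W$ is geometric, and I expect that adjustment to be the delicate point.
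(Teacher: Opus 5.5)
Your proof is correct and is essentially the paper's. The paper takes $\widetilde{\lambda}(t)=B\lambda(t)A$, with $B$ any element making $\diag(A\cdot\V)$ geometric, and then just uses continuity of the action. Your $B$ is block-diagonal, so it commutes with the block-scalar $\lambda(t)$ and your curve $\lambda(t)BA$ is exactly of that form; the commuting issue you flag disappears if $B$ is placed on the left.

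Your remark about the wording is right. A genuine homomorphism cannot degenerate $\V$ to a geometric datum unless $\capa_\Q(\V,\tup)=1$. The paper's $B\lambda(t)A=(BA)(A^{-1}\lambda(t)A)$ is likewise only a translate of a one-parameter subgroup, so your reading matches what the paper actually proves.
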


\begin{proof} It follows from $(\ref{eq:limit-1-psg})$ that there exist an element $A \in \GL(\dd)$ and a $1$-parameter subgroup $\lambda$ such that 
$$
\lim_{t\to 0}\lambda(t)\cdot (A \cdot \V)
$$ 
exists; moreover, if $\widetilde{V}$ denotes this limit, then $(\widetilde{V}, \tup)$ is extremizable. 

By Corollary \ref{coro:extremizable-geometric}, there exists a $B \in \GL(\dd)$ such that $(B \cdot \widetilde{V}, \tup)$ is geometric. Now define a new $1$-parameter subgroup $\widetilde{\lambda}$ by
$$
\widetilde{\lambda}(t)_x:=B_x\cdot \lambda(t)_x \cdot A_x \in \GL(\dd_x) \qquad \text{for every } t \in \RR^\ast \text{~and~} x  \in \Q_0.
$$
It follows that
$$
\lim_{t\to 0}\widetilde{\lambda}(t)\cdot \V
= W,
$$
where $W:=B \cdot \widetilde{V}$. The proof now follows.
\end{proof}

We are now ready to prove Theorem \ref{thm:main}.

\begin{proof}[Proof of Theorem \ref{thm:main}] Let $\mathcal{Y} \subseteq \RR^{D_1}\times \RR^{D_1}$ be the semi-algebraic family from Theorem \ref{thm:partition-semi-alg}. The Definable Choice Property yields a semi-algebraic function $f:\s\to \RR^{D_1}$ such that 
$$
(\V, f(\V)) \in \mathcal{Y} \qquad \text{for all } \V \in \s.
$$
Furthermore, $f(\s)\subseteq \locus$ and $\capa_\Q(\V, \tup)=\capa_\Q(f(\V), \tup)$ for every $\V \in \s$ by Theorem \ref{thm:partition-semi-alg}{(2)}. Thus restricting the capacity to $f(\s)$, we get that 
\begin{equation}\label{eqn:cap-restricted}
\V \to \capa_Q(\V, \tup) \qquad (\V \in \s)
\end{equation}
is a semi-algebraic, and hence algebraic, function on $\s$ by Theorem \ref{thm:main-2}.
\end{proof}

\subsection*{Acknowledgment} We are grateful to Saugata Basu for bringing the Definable Choice Property to our attention, which was essential in completing the proof of our main result. We are also indebted to Neal Bez, Jon Bennett, and Shohei Nakamura for pointing out the necessity of the rationality assumption on $\tup$ in Theorem \ref{thm:main}, and for providing the construction in Example \ref{non-algebraic-example}.

C. Chindris is supported by Simons Foundation grant $\# 711639$. H. Derksen is supported by a Simons Fellowship.

\end{document}